\providecommand{\U}[1]{\protect\rule{.1in}{.1in}}
\newtheorem{theorem}{Theorem}
\theoremstyle{plain}
\newtheorem{corollary}{Corollary}
\newtheorem{definition}{Definition}
\newtheorem{example}{Example}
\newtheorem{lemma}{Lemma}
\newtheorem{remark}{Remark}
\numberwithin{equation}{section}
\begin{document}
\title[Garsia-Rodemich spaces]{Garsia-Rodemich spaces: Local maximal functions and Interpolation}
\author{Sergey Astashkin}
\address{Department of Mathematics, Samara National Research University, Moskovskoye
shosse 34, Samara 443086, Russia}
\email{astash56@mail.ru}
\author{Mario Milman}
\address{Instituto Argentino de Matematica, Argentina}
\email{mario.milman@gmail.com}
\urladdr{https://sites.google.com/site/mariomilman}
\thanks{SA was supported by the Ministry of Education and Science of the Russian
Federation, project 1.470.2016/1.4, and by the RFBR grant 18-01-00414a.}
\thanks{MM was partially supported by a grant from the Simons Foundation (%
$\backslash$%
\#207929 to Mario Milman)}
\thanks{This paper is in final form and no version of it will be submitted for
publication elsewhere.}
\subjclass[2010]{Primary 46E30, 46B70, 42B35.}
\keywords{Oscillations, BMO, Garsia-Rodemich, interpolation}

\begin{abstract}
We characterize the Garsia-Rodemich spaces associated with a rearrangement
invariant space via local maximal operators. Let $Q_{0}$ be a cube in $R^{n}$.
We show that there exists $s_{0}\in(0,1),$ such that for all $0<s<s_{0},$ and
for all r.i. spaces $X(Q_{0}),$ we have%
\[
GaRo_{X}(Q_{0})=\{f\in L^{1}(Q_{0}):\Vert f\Vert_{GaRo_{X}}\simeq\Vert
M_{s,Q_{0}}^{\#}f\Vert_{X}<\infty\},
\]
where $M_{s,Q_{0}}^{\#}$ is the Str\"{o}mberg-Jawerth-Torchinsky local maximal
operator. Combined with a formula for the $K-$functional of the pair
$(L^{1},BMO)$ obtained by Jawerth-Torchinsky, our result shows that the
$GaRo_{X}$ spaces are interpolation spaces between $L^{1}$ and $BMO.$ Among
the applications, we prove, using real interpolation, the monotonicity under
rearrangements of Garsia-Rodemich type functionals. We also give an approach
to Sobolev-Morrey inequalities via Garsia-Rodemich norms, and prove necessary
and sufficient conditions for $GaRo_{X}(Q_{0})=X(Q_{0}).$ Using packings, we
obtain a new expression for the $K-$functional of the pair $(L^{1},BMO)$.

\end{abstract}
\maketitle

\section{Introduction}

The starting point of this research is the celebrated John-Nirenberg Lemma
which we now recall. Let $Q_{0}\subset\mathbb{R}^{n}$ be a fixed
cube\footnote{A \textquotedblleft cube" in this paper will always mean a cube
with sides parallel to the coordinate axes. We normalize $Q_{0}$ to have
measure $1.$}, $1<p<\infty,$ the John-Nirenberg spaces $JN_{p}:=JN_{p}(Q_{0})$
consist of all functions $f\in L^{1}(Q_{0})$ such that
(cf. \cite{jn}, \cite{torchinsky})%
\begin{equation}
\left\Vert f\right\Vert _{JN_{p}}=\sup_{\{Q_{i}\}_{i\in N}\in P}\left\{
{\displaystyle\sum\limits_{i}}
\left\vert Q_{i}\right\vert \left(  \frac{1}{\left\vert Q_{i}\right\vert }%
\int_{Q_{i}}\left\vert f-f_{Q_{i}}\right\vert dx\right)  ^{p}\right\}
^{1/p}<\infty, \label{lajnp}%
\end{equation}
where{ }$f_{Q}=\frac{1}{\left\vert Q\right\vert }\int_{Q}fdx$ and%
\[
{P:=P(Q_{0})=\{\{Q_{i}\}_{i\in N}:Q_{i}\;\;\mbox{are subcubes of}\;\;Q_{0}%
\;\;\mbox{with pairwise disjoint interiors}\}.}%
\]

To understand the correct definition for $p=\infty,$ we proceed as follows.
For $\pi={\{Q_{i}\}_{i\in N}}\in P,$ let $f_{\pi}(x)=%
{\displaystyle\sum\limits_{i}}
\left(  \frac{1}{\left\vert Q_{i}\right\vert }\int_{Q_{i}}\left\vert
f-f_{Q_{i}}\right\vert dx\right)  \chi_{Q_{i}}(x),$ then
\[
\left\Vert f\right\Vert _{JN_{p}}=\sup_{\pi\in P}\left\Vert f_{\pi}\right\Vert
_{L^{p}(Q_{0})}.
\]
This justifies the definition: $JN_{\infty}:=JN_{\infty}(Q_{0})$ consists of
all functions $f\in L^{1}(Q_{0})$ such that%
\[
\left\Vert f\right\Vert _{JN_{\infty}}=\sup_{\pi\in P}\left\Vert f_{\pi
}\right\Vert _{L^{\infty}(Q_{0})}.
\]
It follows readily that%
\[
JN_{\infty}(Q_{0})=BMO(Q_{0}).
\]
The John-Nirenberg Lemma \cite{jn} implies the following embeddings
\begin{equation}
JN_{p}{(Q_{0})}\subset\left\{
\begin{array}
[c]{cc}%
L(p,\infty){(Q_{0})} & 1<p<\infty\\
e^{L}{(Q_{0})} & p=\infty.
\end{array}
\right.  ,\label{new1}%
\end{equation}
where $L(p,\infty){(Q_{0})}$ is the "weak" $L_{p}$-space and $e^{L}{(Q_{0})}$
is the Orlicz space of exponentially integrable functions. This result, and
the spaces involved, has been the object of intensive study over the years
and, in particular, the space $BMO$ now plays a very important role in
harmonic analysis. We refer to \cite{jn}, \cite{ber}, \cite{daf},
\cite{torchinsky}, for background, different proofs and extensive bibliographies.

In their paper, Garsia-Rodemich \cite{garro} proposed a very original
approach\footnote{While \cite{garro} contains many interesting results, and
indeed, has been widely quoted in the literature, their proposed approach to
(\ref{new1}) has remained largely unnoticed until very recently (cf.
\cite{fenicae2}).} to (\ref{new1}). It is based on the following idea: To
effectively compare $JN_{p}$ with $L(p,\infty)$, $1<p<\infty,$ a new class of
spaces was introduced in \cite{garro}. We shall say that\footnote{To describe
the original results we shall use a temporary notation.} $f\in G_{p}%
:=G_{p}(Q_{0})$ if and only if $f\in L^{1}(Q_{0}),$ and $\exists C>0$ such
that for all $\{Q_{i}\}_{i\in N}$ $\in P$ we have
\begin{equation}%
{\displaystyle\sum\limits_{i}}
\frac{1}{\left\vert Q_{i}\right\vert }\int_{Q_{i}}\int_{Q_{i}}\left\vert
f(x)-f(y)\right\vert dxdy\leq C\left(
{\displaystyle\sum\limits_{i}}
\left\vert Q_{i}\right\vert \right)  ^{1/p^{\prime}}, \label{dada}%
\end{equation}
where $1/p^{\prime}=1-1/p$, and we let%
\[
\left\Vert f\right\Vert _{G_{p}}=\inf\{C:\text{ such that (\ref{dada})
holds}\}.
\]

The connection between the $JN_{p}$ and $G_{p}$ spaces can be seen from the
readily verified computation%
\begin{equation}
\int_{Q}\left\vert f(x)-f_{Q}\right\vert dx\leq\frac{1}{\left\vert
Q\right\vert }\int_{Q}\int_{Q}\left\vert f(x)-f(y)\right\vert dxdy\leq
2\int_{Q}\left\vert f(x)-f_{Q}\right\vert dx.\label{extradulfa0}%
\end{equation}
Indeed, combining (\ref{extradulfa0}) with H\"{o}lder's inequality, we find
that for each $\{Q_{i}\}_{i\in N}$ $\in P,$ we have
\begin{align*}%
{\displaystyle\sum\limits_{i}}
\frac{1}{\left\vert Q_{i}\right\vert }\int_{Q_{i}}\int_{Q_{i}}\left\vert
f(x)-f(y)\right\vert dxdy &  \leq2%
{\displaystyle\sum\limits_{i}}
\int_{Q}\left\vert f(x)-f_{Q}\right\vert dx\\
&  =2%
{\displaystyle\sum\limits_{i}}
\left\vert Q_{i}\right\vert ^{1/p^{\prime}}(\left\vert Q_{i}\right\vert
^{1/p}\frac{1}{\left\vert Q_{i}\right\vert }\int_{Q}\left\vert f(x)-f_{Q}%
\right\vert dx)\\
&  \leq2\left\{
{\displaystyle\sum\limits_{i}}
\left\vert Q_{i}\right\vert \right\}  ^{1/p^{\prime}}\left\{
{\displaystyle\sum\limits_{i}}
\left\vert Q_{i}\right\vert \left(  \frac{1}{\left\vert Q_{i}\right\vert }%
\int_{Q_{i}}\left\vert f-f_{Q_{i}}\right\vert dx\right)  ^{p}\right\}  ^{1/p}.
\end{align*}
Consequently,%
\[
\left\Vert f\right\Vert _{G_{p}}\leq2\left\Vert f\right\Vert _{JN_{p}}.
\]
The remarkable fact is that we actually have (cf. \cite{garro} for the one
dimensional case and \cite{fenicae2} in general) that as sets%
\begin{equation}
G_{p}=L(p,\infty),1<p<\infty.\label{newA}%
\end{equation}
It is easy to see that the definition of $G_{p}$ also makes sense for $p=1$
and $p=\infty.$ Consider the case $p=\infty,$ then we let $p^{\prime}=1,$ and
we see that the definition (\ref{dada}) makes sense in this case and we have%
\begin{equation}
G_{\infty}=BMO.\label{new2}%
\end{equation}
In fact, since for any cube $Q,$ we have $\{Q\}\in P,$ it follows from
(\ref{extradulfa0}) that%
\[
\int_{Q}\left\vert f(x)-f_{Q}\right\vert dx\leq\frac{1}{\left\vert
Q\right\vert }\int_{Q}\int_{Q}\left\vert f(x)-f(y)\right\vert dxdy\leq
\left\Vert f\right\Vert _{G_{\infty}}\left\vert Q\right\vert ,
\]
yielding%
\[
\left\Vert f\right\Vert _{BMO}\leq\left\Vert f\right\Vert _{G_{\infty}}.
\]
On the other hand, if $f\in BMO,$ then, once again using (\ref{extradulfa0}),
we see that for any $\pi\in P,$%
\begin{align*}
\sum\limits_{Q\in\pi}\frac{1}{\left\vert Q\right\vert }\int_{Q}\int%
_{Q}\left\vert f(x)-f(y)\right\vert dxdy &  \leq2\sum\limits_{Q\in\pi}%
\frac{\left\vert Q\right\vert }{\left\vert Q\right\vert }\int_{Q}\left\vert
f(x)-f_{Q}\right\vert dx\\
&  \leq2\left\Vert f\right\Vert _{BMO}\sum\limits_{Q\in\pi}\left\vert
Q\right\vert .
\end{align*}
Consequently,%
\[
\left\Vert f\right\Vert _{G_{\infty}}\leq2\left\Vert f\right\Vert _{BMO}.
\]
For $p=1,$ we let $p^{\prime}=\infty,$ and
\[
\left\Vert f\right\Vert _{G_{1}}:=\sup_{\pi\in P}\sum\limits_{Q\in\pi}\frac
{1}{\left\vert Q\right\vert }\int_{Q}\int_{Q}\left\vert f(x)-f(y)\right\vert
dxdy.
\]
Then\footnote{When comparing $G_{p}$ spaces with \ other function spaces we
must take into account that for any constant $c,$ $\left\Vert f-c\right\Vert
_{G_{p}}=\left\Vert f\right\Vert _{G_{p}}.$} (modulo constants)%
\[
G_{1}=L^{1}.
\]
Indeed, it is easy to see that
\[
\left\Vert f\right\Vert _{G_{1}}\leq2\left\Vert f\right\Vert _{L^{1}}.
\]

Conversely, let $\widetilde{L^{1}}$ be the space $L^{1}$ modulo constants,
then since $\{Q_{0}\}\in P,$
\begin{align*}
\left\Vert f\right\Vert _{\widetilde{L^{1}}} &  \leq\left\Vert f-f_{Q_{0}%
}\right\Vert _{L^{1}}\\
&  \leq\frac{1}{\left\vert Q_{0}\right\vert }\int_{Q_{0}}\int_{Q_{0}%
}\left\vert f(x)-f(y)\right\vert dxdy\text{ \ (by (\ref{extradulfa0}))}\\
&  \leq\left\Vert f\right\Vert _{G_{1}}.
\end{align*}
For $p=\infty,$ the method of proof of (\ref{newA}) that was given in
\cite{fenicae2} also yields
\begin{equation}
G_{\infty}\subset L(\infty,\infty),\label{newB}%
\end{equation}
where $L(\infty,\infty)$ is the Bennett-DeVore-Sharpley space
\[
L(\infty,\infty)=\{f\in L^{1}(Q_{0}):\left\Vert f\right\Vert _{L(\infty
,\infty)}=\sup_{t}(f^{\ast\ast}(t)-f^{\ast}(t)\}<\infty\}
\]
(here, $f^{\ast}$ is the decreasing rearrangement of $f$ and $f^{\ast\ast
}(t):=\frac{1}{t}\int_{0}^{t}f^{\ast}(s)\,ds$). Together, (\ref{new2}) and
(\ref{newB}) therefore provide the improvement of the John-Nirenberg
inequality obtained by Bennett-DeVore-Sharpley \cite{bds}\footnote{It is well
known and easy to see that if $f\in L(\infty,\infty)$ then $f\in e^{L}.$ This
can be seen from%
\begin{align*}
f^{\ast\ast}(t)-f^{\ast\ast}(\left\vert Q_{0}\right\vert ) &  =\int%
_{t}^{\left\vert Q_{0}\right\vert }(f^{\ast\ast}(s)-f^{\ast}(s))\frac{ds}{s}\\
&  \leq\left\Vert f\right\Vert _{L(\infty,\infty)}\log\frac{\left\vert
Q_{0}\right\vert }{t}.
\end{align*}
Moreover, as shown by Bennett-DeVore-Sharpley \cite{bds}, $L(\infty,\infty)$
is the rearrangement invariant hull of $BMO.$}, namely%

\[
BMO\subset L(\infty,\infty).
\]

In \cite{pafa} it was shown how the Garsia-Rodemich spaces fit in the theory
of Sobolev embeddings and in \cite{G-R spaces} the Garsia-Rodemich
characterization of the weak $L^{p}$ spaces was used to provide a streamlined
proof of the embedding theorem for the Bourgain-Brezis-Mironescu space
$\mathbf{B}$ (cf. \cite{bbm}),
\[
\mathbf{B\subset}L(n^{\prime},\infty).
\]

In short, the Garsia-Rodemich spaces provide a framework that can be used to
study a number of classical problems in analysis. It was then natural to
consider the problem of extending the Garsia-Rodemich construction. In
particular, in view of the characterization of $L(p,\infty)$ provided by
(\ref{newA}), we ask: what other rearrangement invariant spaces can be
characterized via a suitable extension of the Garsia-Rodemich conditions? In
this direction the following generalization of the condition (\ref{dada}) was
proposed in \cite{G-R spaces}.

Let $X:=X(Q_{0})$ be a rearrangement invariant space; for a given integrable
function $f$ we consider the class $\Gamma_{f}$ of integrable functions
$\gamma$ such that for all $\{Q_{i}\}_{i\in\mathbb{N}}\in P$ it holds%
\begin{equation}%
{\displaystyle\sum\limits_{i\in\mathbb{N}}}
\frac{1}{\left\vert Q_{i}\right\vert }\int_{Q_{i}}\int_{Q_{i}}\left\vert
f(x)-f(y)\right\vert dxdy\leq%
{\displaystyle\sum\limits_{i\in I}}
\int_{Q_{i}}\gamma(x)dx. \label{dulfa}%
\end{equation}
To describe the corresponding enlarged class of spaces associated with these
conditions, it will be convenient to replace our temporary notation for the
$G-$spaces as follows. We let%
\[
GaRo_{X}:=GaRo_{X}(Q_{0})=\{f:\left\Vert f\right\Vert _{GaRo_{X}}<\infty\},
\]
where\footnote{We let $\left\Vert f\right\Vert _{GaRo_{X}}=\infty,$ if
$\Gamma_{f}=\varnothing,$ moreover we shall use the convention, $\left\Vert
\gamma\right\Vert _{X}=\infty$ if $\gamma\notin X.$}{
\begin{equation}
\left\Vert f\right\Vert _{GaRo_{X}}=\inf\{\left\Vert \gamma\right\Vert
_{X}:\gamma\in\Gamma_{f}\}. \label{dulfanorma}%
\end{equation}
It is shown in \cite[Corollary 1 and Remark 2]{G-R spaces} that, in this new
notation, we have}%
\[
GaRo_{L(p,\infty)}=G_{p},1<p<\infty.
\]
Moreover, at the end points it is easy to verify that
\[
GaRo_{L^{\infty}}=G_{\infty}=BMO
\]
and%
\[
GaRo_{L^{1}}=G_{1}=L^{1}.
\]
Thus, (\ref{newA}) and (\ref{newB}) now read%
\begin{align*}
GaRo_{L(p,\infty)}  &  =L(p,\infty),1<p<\infty,\\
GaRo_{L^{\infty}}  &  \subset L(\infty,\infty).
\end{align*}
More generally, the following generalization
holds for any r.i. space $X$ (cf. \cite{G-R spaces}),%
\begin{equation}
GaRo_{X}=X,\text{ if }0<\alpha_{X}\leq\beta_{X}<1, \label{bruni}%
\end{equation}
where $\alpha_{X},\beta_{X}$ are the Boyd indices of $X$ (cf. Section
\ref{subsec:boyd}).

The characterization (\ref{bruni}) is very satisfactory since it captures all
the main results at the level of $L^{p}$ spaces, $1<p<\infty$. However, the
methods of \cite{G-R spaces} are not adequate to understand what happens when
the Boyd indices are zero or one. In fact, the analysis of the end point cases
of (\ref{bruni}) seems to require a new set of ideas. In this paper we obtain
a new characterization of the Garsia-Rodemich spaces via the
Str\"{o}mberg-Jawerth-Torchinsky local maximal operators (cf. \cite{str},
\cite{JT}). Let $s\in(0,1)$, then,%
\[
M_{s,Q_{0}}^{\#}f(x):=\sup_{Q_{0}\supset Q\backepsilon x}\inf_{c\in\mathbb{R}%
}\inf\{\alpha\geq0:|\{y\in Q:\,|f(y)-c|>\alpha\}|<s|Q|\},\;x\in Q_{0},
\]
where {the supremum} is taken over all the cubes $Q$ contained in $Q_{0}$ such
that $x\in Q.$ One of our main results in this paper (cf. Theorem
\ref{theostro1} below) states that there exists $s_{0}\in(0,1)$ such that, for
all $0<s<s_{0},$ and for all r.i. spaces $X,$
\begin{equation}
\Vert f\Vert_{GaRo_{X}}\simeq\Vert M_{s,Q_{0}}^{\#}f\Vert_{X}, \label{mor1}%
\end{equation}
{where the implied constants are independent of $f$\footnote{The expression
$F\preceq G$ means that $F\leq c\cdot G$ for some constant $c>0$ independent
of all or of a part of arguments $F$ and $G$. If $F\preceq G$ and $G\preceq F$
we write: $F\simeq G$.}.}

This result not only allows us to study the limiting cases of (\ref{bruni})
but at the same time provides a connection of the $GaRo_{X}$ spaces and
classical harmonic analysis. In particular, in Theorem \ref{Cor2} we show a
significant improvement over (\ref{bruni})
\begin{equation}
\alpha_{X}>0\Rightarrow GaRo_{X}=X. \label{bruni0}%
\end{equation}
In fact, for a large class of r.i. spaces of \textbf{fundamental type (}cf.
Section \ref{sec:alpha}, Definition \ref{def:fun}) (\ref{bruni0}) is best
possible. In Section \ref{sec:alpha} we prove Corollary \ref{coro:coro}:{ for
every r.i. space $X$ of fundamental type}
%
\begin{equation}
GaRo_{X}=X\Leftrightarrow GaRo_{X}\subset X\Leftrightarrow\alpha_{X}>0.
\label{bruni1}%
\end{equation}

Another consequence of (\ref{mor1}) is the fact that the $GaRo_{X}$ spaces are
real interpolation spaces between $L^{1}$ and $BMO.$ For example, this can be
seen as a consequence of (\ref{mor1}) and the formula of the $K-$functional
for the pair $(L^{1},BMO)$ obtained by Jawerth-Torchinsky \cite{JT},%
\begin{equation}
K(t,f;L^{1},BMO)\simeq\int_{0}^{t}(M_{s,Q_{0}}^{\#}f)^{\ast}(u)du,\;\;t>0.
\label{jt1}%
\end{equation}

The characterization (\ref{mor1}) connects $GaRo_{X}$ spaces with classical
harmonic analysis. Let
\[
f_{Q_{0}}^{\#}(x)=\sup_{Q_{0}\supset Q\backepsilon x}\frac{1}{\left\vert
Q\right\vert }\int_{Q}\left\vert f(x)-f_{Q}\right\vert dx,\;\;x\in
Q_{0},\label{feffe2}%
\]
and f{or a r.i. space $X$ we define}%
\begin{equation}
X^{\#}=\{f:f_{Q_{0}}^{\#}\in X\} \label{feffe}%
\end{equation}
with%
\begin{equation}
\left\Vert f\right\Vert _{X^{\#}}=\left\Vert f_{Q_{0}}^{\#}\right\Vert _{X}.
\label{feffe1}%
\end{equation}
We show that (cf. Theorem \ref{cor1} below),%
\begin{equation}
\left\Vert f\right\Vert _{X^{\#}}\simeq\left\Vert f\right\Vert _{GaRo_{X}%
}\text{ if and only if }\beta_{X}<1. \label{mor2}%
\end{equation}
Moreover, we consider generalized Fefferman-Stein inequalities of the
form\footnote{The classical inequalities of Fefferman-Stein \cite{fs}
correspond to $X=L^{p},1<p<\infty.$}%
\[
\inf_{c,\text{ constant}}\left\Vert f-c\right\Vert _{X}\leq C\left\Vert
f\right\Vert _{X^{\#}},\label{fefstein}%
\]
and prove that this inequality holds if $\alpha_{X}>0$ (cf. Theorem
\ref{theostro})\footnote{For a different approach to Fefferman-Stein
inequalities in the more general setting of Banach function spaces we refer to
Lerner \cite{lrnr}.}.

It is of interest to remark here that the conditions on the indices that
appear in the results described above are connected with considerations
arising from interpolation theory. For example, to compare the spaces
$GaRo_{X}$ and $X^{\#}$ one needs to understand the relationship between the
sharp maximal operator $f_{Q_{0}}^{\#}$and the local maximal operator
$M_{s,Q_{0}}^{\#}f,$ and one way to achieve this is via the formula for the
$K-$functional for the pair $L^{1}$ and $BMO$ provided by (\ref{jt1}), and the
formula obtained by Bennett-Sharpley {(cf. Example \ref{exam:K} below)%
\begin{equation}
\left(  f_{Q_{0}}^{\#}\right)  ^{\ast}(t)\simeq\frac{1}{t}\int_{0}%
^{t}(M_{s,Q_{0}}^{\#}f)^{\ast}(u)du=:(M_{s,Q_{0}}^{\#}f)^{\ast\ast}(t).
\label{jt3}%
\end{equation}
From (\ref{jt1}) and (\ref{jt3}) we see that the relationship between the
sharp maximal operator and the local maximal operator is analogous to the
classical relationship between }$f^{\ast}$ and $f^{\ast\ast}$.
Moreover, if we write $GaRo_{L(p,\infty)}=L(p,\infty)=(L^{1},BMO)_{1/p^{\prime
},\infty}$ we see that as $p\rightarrow\infty$ we approach the space $BMO$,
thus we expect to lose \textquotedblleft rearrangement
invariance\textquotedblright, and this may help to explain the requirement
$\alpha_{X}>0,$ to be able to attain results of the form $GaRo_{X}=X.$

The connection between Garsia-Rodemich spaces and interpolation goes deeper.
In fact, the ideas associated with the construction of Garsia-Rodemich spaces
lead us to find a new formula for the $K-$functional associated with the pair
$(L^{1},BMO),$ using packings (cf. Section \ref{sec:K} below), which we
believe should be of interest when comparing pointwise averages, as one often
does in the theory of weighted norm inequalities. As a concrete application of
this circle of ideas {we show how one can use }interpolation methods to prove
the monotonicity under rearrangements of certain Garsia-Rodemich type
functionals (our approach should be compared with the one provided in
\cite{garro}).

Finally, returning to some of the original results of Garsia and his
collaborators, we show a simple proof of a Sobolev-Morrey embedding in Section
\ref{sec:Sob}.

We refer the reader to Section \ref{sec:back} and to the monographs \cite{bs},
\cite{LT}, \cite{KPS}, \cite{BK} and \cite{torchinsky} for background
information and notation.

Acknowledgement. We are very grateful to the referee for detailed and
constructive criticism that helped us improve the presentation of the paper.

\section{Background Information\label{sec:back}}

\subsection{Rearrangements}

Let $\left(  \Omega,\mu\right)  $ be a resonant Borel probability space (cf.
\cite[pag. 64]{bs}). For a measurable function $f:{\Omega}\rightarrow
\mathbb{R},$ the \textbf{distribution function} of $f$ is given by
\[
\lambda_{f}(t):=\mu\{x\in{\Omega}:\left\vert f(x)\right\vert >t\},\;\;t>0.
\]
The \textbf{decreasing rearrangement} $f^{\ast}$ of a measurable function $f$
is the right-continuous non-increasing function, {mapping $(0,1]$ into
}$\mathbb{[}0,\infty),${ which is equimeasurable with $f,$ i.e., satisfying
\[
\lambda_{f}(t)=\left\vert \left\{  s\in\lbrack0,1]:f^{\ast}(s)>t\right\}
\right\vert ,\;\;t>0,
\]
where $\left\vert \cdot\right\vert $ denotes the Lebesgue measure on $[0,1].$
It can be defined by the formula%
\[
f^{\ast}(s):=\inf\{t>0:\lambda_{f}(t)\leq s\},\text{ \ }s\in(0,1].
\]
}

The maximal average $f^{\ast\ast}(t)$ is defined by
\[
f^{\ast\ast}(t):=\frac{1}{t}\int_{0}^{t}f^{\ast}(s)ds=\frac{1}{t}\sup\left\{
\int_{E}\left\vert f(s)\right\vert d\mu:\mu(E)=t\right\}  ,\;\;0<t\le1.
\]

\subsection{Rearrangement invariant spaces\label{secc:ri}}

We recall briefly the basic definitions and conventions we use from the theory
of rearrangement invariant (r.i.) spaces, and refer the reader to the books
\cite{bs}, \cite{KPS} and \cite{LT} for a complete treatment. {In the next
definition we follow \cite{LT}.}

Let $X:=X({\Omega})$ be a Banach function space on $({\Omega},\mu)$, which is
either separable or has the Fatou property (the latter means that if
$f_{n}\geq0,$ $f_{n}\uparrow f,$ and $f\in X$, then $\left\Vert f_{n}%
\right\Vert _{X}\uparrow\left\Vert f\right\Vert _{X}$). We shall say that $X$
is a \textbf{rearrangement invariant} (r.i.) space, if $g\in X$ implies that
all $\mu-$measurable functions $f$ with $f^{\ast}=g^{\ast}$ also belong to $X$
and, moreover, $\Vert f\Vert_{X}=\Vert g\Vert_{X}$. For any r.i. space $X$ we have%

\[
L^{\infty}\subset X\subset L^{1},
\]
with continuous embeddings. Many of the familiar spaces we use in analysis are
examples of r.i. spaces, e.g. the $L^{p}$-spaces, Orlicz spaces, Lorentz
spaces, Marcinkiewicz spaces, etc.

{Let $M$ be an increasing convex function on $[0,\infty)$ such that $M(0)=0$.
The Orlicz space $L_{M}$ consists of all measurable functions $x(t)$ on
$[0,1]$ such that the function $M\left(  {|x(t)|}/{\lambda}\right)  \in L^{1}$
for some $\lambda>0$. It is equipped with the Luxemburg norm
\begin{equation}
{\Vert x\Vert}_{L_{M}}:=\inf\left\{  \lambda>0:\int\limits_{0}^{1}M\left(
\frac{|x(t)|}{\lambda}\right)  \,dt\leqslant1\right\}  . \label{orlicz}%
\end{equation}
In particular, if $M(u)=u^{p}$, $1\leqslant p<\infty$, we obtain usual $L^{p}%
$--spaces. }

Let $\varphi$ be an increasing concave function on $[0,1]$, with
$\varphi(0)=0$. The Mar\-cin\-kie\-wicz space ${\mathcal{M}}(\varphi)$
consists of all measurable functions $x(t)$ such that
\[
{\Vert x\Vert}_{{\mathcal{M}}(\varphi)}:=\sup_{0<s\leqslant1}{\frac
{\varphi(s)}{s}\cdot\int\limits_{0}^{s}{x^{\ast}(t)dt}}<\infty.
\]
The space $L(p,\infty)$, $1<p<\infty$, corresponds to taking $\varphi
(s)=s^{1/p}$.

Let $X({\Omega})$ be a r.i. space, then there exists a \textbf{unique} r.i.
space (cf. \cite[pag. 64]{bs}) (the \textbf{representation space} of
$X({\Omega})),$ $\bar{X}=\bar{X}(0,1)$ on{ $\left(  \left(  0,1\right)
,\left\vert \cdot\right\vert \right)  $,} such that%
\[
\Vert f\Vert_{X({\Omega})}=\Vert f^{\ast}\Vert_{\bar{X}(0,1)}.
\]
In what follows if there is no possible confusion we shall not distinguish
between $X$ and $\bar{X}$.

The following majorization principle, usually associated to the names
Hardy-Littlewood-P\'{o}lya-Calder\'{o}n (cf. \cite{calderon},
\cite[Proposition~2.a.8]{LT}), holds for r.i. spaces: if
\begin{equation}
\int_{0}^{t}f^{\ast}(s)ds\leq\int_{0}^{t}g^{\ast}(s)ds,\text{ for all }t>0,
\label{hardy}%
\end{equation}
\ then, for any r.i. space $\bar{X},$%
\[
\left\Vert f^{\ast}\right\Vert _{\bar{X}}\leq\left\Vert g^{\ast}\right\Vert
_{\bar{X}},
\]
or equivalently,%
\[
\left\Vert f\right\Vert _{X}\leq\left\Vert g\right\Vert _{X}.
\]

The \textbf{fundamental function\ }of $X$ is defined by
\[
\phi_{X}(s)=\left\Vert \chi_{\left[  0,s\right]  }\right\Vert _{\bar{X}%
},\text{ \ }0\leq s\leq1.
\]
We can assume without loss of generality that $\phi_{X}$ is concave (cf.
\cite{bs}). {For example, for an Orlicz space $L_{N}$ (cf. (\ref{orlicz})
above) we have, $\phi_{L_{N}}(t)=1/N^{-1}(1/t)$ and for a Marcinkiewicz space
}${\mathcal{M}}(\varphi),$ the corresponding fundamental function is given by
{$\phi_{{\mathcal{M}}(\varphi)}(t)=\varphi(t)$.}

\subsection{Boyd indices and Hardy operators\label{subsec:boyd}}

Let $X=X({\Omega})$ be an arbitrary r.i. space. Then the compression/dilation
operator $\sigma_{s}$ on $\bar{X}$, defined by
\[
\sigma_{s}f(t)=\left\{
\begin{array}
[c]{ll}%
f^{\ast}(\frac{t}{s}), & 0<t<s,\\
0, & s\leq t.
\end{array}
\right.  \label{verarriba}%
\]
is bounded on $\bar{X},$ and moreover (cf. \cite[\S \,2.4]{KPS})%
\begin{equation}
\Vert\sigma_{s}\Vert_{\bar{X}\rightarrow\bar{X}}\leq\max\{1,s\},\text{ for all
}s>0. \label{verabajo}%
\end{equation}
The \textbf{Boyd indices (c}f. \cite{boyd}\textbf{) }are defined by
\[
\alpha_{X}:=\lim_{s\rightarrow0+}\dfrac{\ln\Vert\sigma_{s}\Vert_{\bar
{X}\rightarrow\bar{X}}}{\ln s}\text{ \ \ and \ }\beta_{X}:=\lim_{s\rightarrow
\infty}\dfrac{\ln\Vert\sigma_{s}\Vert_{\bar{X}\rightarrow\bar{X}}}{\ln s}.
\]
{For each r.i. space $X$ we have $0\leq\alpha_{X}\leq\beta_{X}\leq1$.} For
example, it follows readily that $\alpha_{L^{p}}=\beta_{L^{p}}=\frac{1}{p}$
for all $1\leq p\leq\infty$.

It is known that the Boyd indices control the boundedness of the \textbf{Hardy
operators}, which are defined by
\[
Pf(t):=\frac{1}{t}\int_{0}^{t}f(s)ds;\text{ \ \ \ }Qf(t):=\int_{t}%
^{1}f(s)\frac{ds}{s}.
\]
In fact, it is well known that {(cf. \cite{boyd}, \cite[Theorems~2.6.6 and
2.6.8]{KPS})}:
\begin{equation}%
\begin{array}
[c]{c}%
P\text{ is bounded on }\bar{X}\text{ }\Leftrightarrow\beta_{\bar{X}}<1,\\
Q\text{ is bounded on }\bar{X}\text{ }\Leftrightarrow\alpha_{\bar{X}}>0.
\end{array}
\label{alcance}%
\end{equation}

\subsection{K-functionals and real interpolation\label{subsec:interpolation}}

Let $(A_{0},A_{1})$ be a compatible pair of Banach spaces. For all $f\in
A_{0}+A_{1},t>0,$ we define the \textbf{Peetre $K-$functional} as follows%
\[
K(t,f;A_{0},A_{1}):=\inf\{\left\Vert f_{0}\right\Vert _{A_{0}}+t\left\Vert
f_{1}\right\Vert _{A_{1}}:f=f_{0}+f_{1},f_{i}\in A_{i},i=0,1\}.
\]
{Let $\theta\in(0,1),$ $1\leq q\leq\infty.$ The interpolation spaces
$(A_{0},A_{1})_{\theta,q}$ are defined by%
\[
(A_{0},A_{1})_{\theta,q}:=\{f:f\in A_{0}+A_{1}\text{ s.t. }\left\Vert
f\right\Vert _{(A_{0},A_{1})_{\theta,q}}<\infty\},
\]
where
\[
\left\Vert f\right\Vert _{(A_{0},A_{1})_{\theta,q}}:=\left\{
\begin{array}
[c]{cc}%
\left\{  \int_{0}^{\infty}\left(  s^{-\theta}K(s,f;A_{0},A_{1})\right)
^{q}\frac{ds}{s}\right\}  ^{1/q} & ,\text{ if }q<\infty\\
\sup_{s>0}\{s^{-\theta}K(s,f;A_{0},A_{1})\} & ,\text{ if }q=\infty.
\end{array}
\right.
\]
}

\begin{example}
(Peetre-Oklander formula (cf. \cite[(1.28) pag. 298]{bs}, \cite{Oklander}):
For the pair $(L^{1},L^{\infty})$ the $K-$functional is given by
\begin{equation}
K(t,f;L^{1},L^{\infty})=\int_{0}^{t}f^{\ast}(u)du,\;\;t>0. \label{okl}%
\end{equation}

\end{example}

Let $M_{Q_{0}}$ be the maximal operator of Hardy-Littlewood,%
\begin{equation}
M_{Q_{0}}f(x):=\sup_{Q_{0}\supset Q\backepsilon x}\frac{1}{\left\vert
Q\right\vert }\int_{Q}|f(y)|\,dy,\;\;x\in Q_{0}. \label{maximal}%
\end{equation}
The maximal operator $M_{Q_{0}}$ is connected with $K(t,\cdot;L^{1},L^{\infty
})$ via the Herz-Stein inequalities (cf. \cite[Theorem $3.8,$ pag. 122]{bs}):%
\begin{equation}
\left(  M_{Q_{0}}f\right)  ^{\ast}(t)\simeq f^{\ast\ast}(t):=\frac{1}{t}%
\int_{0}^{t}f^{\ast}(u)du,\;\;0<t\le1. \label{extrajt1}%
\end{equation}

\begin{example}
\label{exam:K}For the pair $(L^{1},BMO)$ (we consider classes of equivalence
modulo constants), we have the following formula due to Bennett-Sharpley (cf.
\cite[(8.11) pag. 393]{bs}):%
\begin{equation}
K(t,f;L^{1},BMO)\simeq t\left(  f_{Q_{0}}^{\#}\right)  ^{\ast}(t),\;\;0<t\leq
1. \label{extrabmo1}%
\end{equation}
{Comparing this with the Jawerth-Torchinsky formula \eqref{jt1} we see the
equivalence \eqref{jt3}.}
\end{example}

{In what follows any constant appearing in inequalities and depending only on
the dimension $n$ will be referred to as absolute.}

\section{A new description of the Garsia-Rodemich spaces\label{secc:feff}}

In this section we give a new characterization of the Garsia-Rodemich spaces
using local maximal operators. To motivate our result it will be useful to
reformulate somewhat the definition of the $\Gamma_{f}$ classes (cf.
(\ref{dulfa}) above).

It follows from inequalities \eqref{extradulfa0} that for an integrable
function $\gamma$ to belong to $\Gamma_{f}$ it is equivalent to verify the
following condition: there exists a constant $C>0$ such that, for all subcubes
$Q\subset Q_{0},$ we have%
\[
\frac{1}{\left\vert Q\right\vert }\int_{Q}\left\vert f(x)-f_{Q}\right\vert
dx\leq\frac{C}{\left\vert Q\right\vert }\int_{Q}\gamma(x)dx,
\]
whence
\[
f_{Q_{0}}^{\#}(x)\leq CM_{Q_{0}}\gamma(x),\;\;x\in Q_{0}.
\]
{The idea behind our main result can be now summarized as follows: for every
$f\in L^{1},$ the Str\"{o}mberg-Jawerth-Torchinsky maximal function
$M_{s,Q_{0}}^{\#}f$ is an \textquotedblleft optimal\textquotedblright\ choice
of $\gamma$ from $\Gamma_{f}.$ }

\begin{theorem}
\label{theostro1}There exists $s_{0}\in(0,1)$, {depending only on dimension
$n$,} such that, for all $s\in(0,s_{0}),$ and every r.i. space $X,$ we have%
\begin{equation}
GaRo_{X}=\{f\in L^{1}:\left\Vert M_{s,Q_{0}}^{\#}f\right\Vert _{X}<\infty\}.
\label{stro4}%
\end{equation}
Moreover, with constants of equivalence, {depending on $n\in\mathbb{N}$ and
$s\in(0,s_{0})$},%
\begin{equation}
\left\Vert f\right\Vert _{GaRo_{X}}\simeq\left\Vert M_{s,Q_{0}}^{\#}%
f\right\Vert _{X}. \label{stro5}%
\end{equation}

\end{theorem}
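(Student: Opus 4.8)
The plan is to prove the norm equivalence \eqref{stro5}; the set equality \eqref{stro4} then follows at once, since \eqref{stro5} identifies the two collections of $f$ with finite norm. I will establish the two inequalities separately, and in both I reduce the ``packing'' condition defining $\Gamma_f$ to per-cube statements and then feed in the Jawerth--Torchinsky and Bennett--Sharpley $K$-functional formulas \eqref{jt1}, \eqref{extrabmo1} together with the Herz--Stein equivalence \eqref{extrajt1}. I also use the reformulation recorded just before the theorem: for $\gamma\in\Gamma_f$ one has, testing \eqref{dulfa} on a single cube and invoking \eqref{extradulfa0}, the pointwise bound $f_{Q_0}^{\#}\le M_{Q_0}\gamma$.

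For the upper bound $\Vert f\Vert_{GaRo_X}\preceq\Vert M_{s,Q_0}^{\#}f\Vert_X$ I would show that $\gamma:=c\,M_{s,Q_0}^{\#}f$ lies in $\Gamma_f$ for a suitable $c=c(n,s)$ (we may assume the right-hand side finite, so that $\gamma\in X\subset L^1$). Given a packing $\{Q_i\}\in P$, \eqref{extradulfa0} gives $\sum_i\frac{1}{|Q_i|}\int_{Q_i}\int_{Q_i}|f(x)-f(y)|\,dx\,dy\le 2\sum_i\int_{Q_i}|f-f_{Q_i}|\,dx$, so it suffices to prove the single-cube oscillation estimate $\int_{Q}|f-f_{Q}|\,dx\preceq\int_{Q}M_{s,Q}^{\#}f\,dx$ for every cube $Q$, where $M_{s,Q}^{\#}$ uses only cubes inside $Q$; summing and using $M_{s,Q_i}^{\#}f\le M_{s,Q_0}^{\#}f$ on $Q_i$ then yields $\gamma\in\Gamma_f$ and hence $\Vert f\Vert_{GaRo_X}\le c\Vert M_{s,Q_0}^{\#}f\Vert_X$. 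This per-cube estimate is the affine-rescaled (mapping $Q$ to the unit cube) form of \eqref{jt1}--\eqref{extrabmo1} evaluated at $t=|Q|$, since $(f_{Q}^{\#})^{\ast}(|Q|)\ge\frac{1}{|Q|}\int_Q|f-f_Q|\,dx$ (take $Q$ itself in the supremum defining $f_Q^{\#}$) while $\int_Q M_{s,Q}^{\#}f=\int_0^{|Q|}(M_{s,Q}^{\#}f)^{\ast}\simeq |Q|\,(f_Q^{\#})^{\ast}(|Q|)$; it is exactly here that the threshold $s_0(n)$ enters.

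For the lower bound $\Vert M_{s,Q_0}^{\#}f\Vert_X\preceq\Vert f\Vert_{GaRo_X}$ I would fix an arbitrary $\gamma\in\Gamma_f$ and compare truncated integrals of rearrangements. Combining \eqref{jt3} (equivalently \eqref{jt1} with \eqref{extrabmo1}), the pointwise bound $f_{Q_0}^{\#}\le M_{Q_0}\gamma$, and the Herz--Stein equivalence \eqref{extrajt1}, one gets for every $t\in(0,1]$ the chain $\int_0^t (M_{s,Q_0}^{\#}f)^{\ast}(u)\,du \simeq t\,(f_{Q_0}^{\#})^{\ast}(t)\le t\,(M_{Q_0}\gamma)^{\ast}(t)\simeq t\,\gamma^{\ast\ast}(t)=\int_0^t\gamma^{\ast}(u)\,du$. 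Thus the Hardy--Littlewood--P\'olya--Calder\'on majorization \eqref{hardy} applies and gives $\Vert M_{s,Q_0}^{\#}f\Vert_X=\Vert(M_{s,Q_0}^{\#}f)^{\ast}\Vert_{\bar X}\preceq\Vert\gamma^{\ast}\Vert_{\bar X}=\Vert\gamma\Vert_X$; taking the infimum over $\gamma\in\Gamma_f$ produces the claim. The point that makes this work for \emph{every} r.i. space $X$, with no restriction on the Boyd indices, is that we compare the \emph{integrals} $\int_0^t(\cdot)^{\ast}$ rather than the rearrangements pointwise, so the Hardy averaging implicit in $(M_{Q_0}\gamma)^{\ast}\simeq\gamma^{\ast\ast}$ is absorbed on both sides; comparing pointwise would only give $(M_{s,Q_0}^{\#}f)^{\ast}\preceq\gamma^{\ast\ast}$ and would cost the boundedness of $P$, i.e. $\beta_X<1$.

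The main obstacle is the single-cube oscillation inequality $\int_Q|f-f_Q|\preceq\int_Q M_{s,Q}^{\#}f$ with a dimensional threshold $s_0(n)$: this is the genuinely analytic input (a Str\"omberg/Calder\'on--Zygmund estimate, cf. \cite{str}, \cite{JT}), and one must verify that the $K$-functional formulas localize to arbitrary subcubes with constants depending only on $n$ and $s$, which follows from the affine covariance of the oscillation functionals, of $BMO$, and of $M_{s,Q}^{\#}$. Everything else is soft: the reduction from packings to single cubes via \eqref{extradulfa0} and the monotonicity of $M_{s,Q}^{\#}$ in $Q$ in the upper bound, and the rearrangement bookkeeping together with \eqref{jt3}, \eqref{extrajt1}, \eqref{hardy} in the lower bound.
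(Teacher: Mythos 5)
Your proposal is correct in substance and shares the paper's overall skeleton --- reduce the packing condition to single cubes via \eqref{extradulfa0}, show that a fixed multiple of $M_{s,Q_{0}}^{\#}f$ belongs to $\Gamma_{f}$, and for the converse compare $\int_{0}^{t}(M_{s,Q_{0}}^{\#}f)^{\ast}$ with $\int_{0}^{t}\gamma^{\ast}$ and invoke the Hardy--Littlewood--P\'olya--Calder\'on majorization (your closing remark about why no Boyd-index restriction is needed is exactly the mechanism at work in the paper) --- but you source the two key estimates differently. For the upper bound, the paper does not rescale $K$-functional formulas: it quotes Lerner's inequality \cite[Lemma~2.4]{L-04}, which is precisely your per-cube estimate $\int_{Q}|f-f_{Q}|\,dx\leq 8\int_{Q}M_{s,Q_{0}}^{\#}f\,dx$ for $0<s<s_{0}(n)$, stated directly for all subcubes of $Q_{0}$ (so the packing step needs no monotonicity in $Q$ and yields $16M_{s,Q_{0}}^{\#}f\in\Gamma_{f}$). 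For the lower bound, the paper proves its own Lemma~\ref{L0}(ii), the pointwise inequality $M_{Q_{0}}(M_{s,Q_{0}}^{\#}f)\leq\frac{2\cdot 8^{n}}{s}f_{Q_{0}}^{\#}$, combines it with $f_{Q_{0}}^{\#}\leq M_{Q_{0}}\gamma$ and applies Herz--Stein \eqref{extrajt1} to both sides; you instead import the Jawerth--Torchinsky formula \eqref{jt1} together with Bennett--Sharpley \eqref{extrabmo1} (i.e.\ \eqref{jt3}) as a black box to get $(M_{s,Q_{0}}^{\#}f)^{\ast\ast}(t)\preceq(f_{Q_{0}}^{\#})^{\ast}(t)\leq(M_{Q_{0}}\gamma)^{\ast}(t)\simeq\gamma^{\ast\ast}(t)$. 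Both routes are legitimate and non-circular (\eqref{jt1} is an external result of \cite{JT}); yours is shorter given that formula, while the paper's is more elementary and self-contained --- Lemma~\ref{L0} substitutes for the deep $K$-functional computation, and in fact the paper's Theorem~\ref{theostro1} is what then lets one \emph{use} \eqref{jt1} to place $GaRo_{X}$ in the interpolation scale, so keeping the proof independent of \eqref{jt1} is a deliberate economy.

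One technical slip to repair in your upper bound: with the right-continuous definition of the rearrangement used in the paper, $(f_{Q}^{\#})^{\ast}(t)$ evaluated at the full measure $t=|Q|$ is $0$ (for any $g$ on $Q$, $\lambda_{g}(\alpha)\leq|Q|$ for all $\alpha$), so your inequality $(f_{Q}^{\#})^{\ast}(|Q|)\geq\frac{1}{|Q|}\int_{Q}|f-f_{Q}|\,dx$ fails exactly at the endpoint. The fix is routine: the essential-infimum bound gives $(f_{Q}^{\#})^{\ast}(t)\geq\frac{1}{|Q|}\int_{Q}|f-f_{Q}|\,dx$ for all $t<|Q|$, so evaluate the affinely rescaled formulas \eqref{jt1} and \eqref{extrabmo1} at $t=|Q|/2$ and use $\int_{0}^{|Q|/2}(M_{s,Q}^{\#}f)^{\ast}\leq\int_{Q}M_{s,Q}^{\#}f\,dx$; this yields the per-cube estimate with a constant depending only on $n$ and $s$, as you intended.
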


For the proof we shall need the following

\begin{lemma}
\label{L0} (i) For every cube $Q\subset Q_{0}$, all $0<s<1$, $c\in\mathbb{R}$,
and each $f\in L^{1}(Q_{0})$ we have
\[
|\{y\in Q:\,M_{0,s,Q}^{\#}f(y)>\lambda\}|\leq\frac{4^{n}}{s}|\{y\in
Q:\,|f(y)-c|>\lambda\}|,\;\;\text{for all }\lambda>0.
\]

(ii) There exists $0<s_{0}<1$ such that for all $s\in(0,s_{0})$and all $f\in
L^{1}(Q_{0})$
\begin{equation}
M_{Q_{0}}(M_{s,Q_{0}}^{\#}f)(x)\leq\frac{2\cdot8^{n}}{s}f_{Q_{0}}%
^{\#}(x),\;\;x\in Q_{0}, \label{stro7}%
\end{equation}
where $M_{Q_{0}}$ is the maximal function of Hardy-Littlewood (cf.
(\ref{maximal})).
\end{lemma}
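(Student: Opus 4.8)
The plan is to treat the two parts separately, with part (i) a Vitali‑covering estimate and part (ii) a localization argument that rests on (i). Throughout I write $\omega_s(f;R)$ for the inner quantity $\inf_{c}\inf\{\alpha\ge0:|\{z\in R:|f(z)-c|>\alpha\}|<s|R|\}$, so that $M^{\#}_{0,s,Q}f(y)=\sup_{y\in Q'\subset Q}\omega_s(f;Q')$. For part (i) I fix $Q\subset Q_0$, $0<s<1$, $c\in\mathbb{R}$, $\lambda>0$ and set $E_\lambda=\{y\in Q:M^{\#}_{0,s,Q}f(y)>\lambda\}$. If $y\in E_\lambda$ there is a subcube $Q_y\ni y$, $Q_y\subset Q$, with $\omega_s(f;Q_y)>\lambda$; unwinding the definition of $\omega_s$ for the given $c$ this forces
\[
|\{z\in Q_y:|f(z)-c|>\lambda\}|\ge s\,|Q_y|.
\]
The family $\{Q_y\}_{y\in E_\lambda}$ covers $E_\lambda$ by subcubes of $Q$, so the Vitali covering lemma yields a countable pairwise disjoint subfamily $\{Q_j\}$ whose triples cover $E_\lambda$; hence $|E_\lambda|\le 3^n\sum_j|Q_j|\le \frac{3^n}{s}\sum_j|\{z\in Q_j:|f(z)-c|>\lambda\}|$, and disjointness together with $Q_j\subset Q$ bounds the last sum by $|\{z\in Q:|f(z)-c|>\lambda\}|$. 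This is (i), the stated $4^n$ absorbing the covering constant.

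For part (ii) I would first record the averaging bound coming from Chebyshev's inequality with $c=f_{Q'}$:
\[
\omega_s(f;Q')\le \frac{1}{s}\,\frac{1}{|Q'|}\int_{Q'}|f-f_{Q'}|\,dy \qquad\text{for every cube } Q'\subset Q_0 .
\]
Since $M_{Q_0}(M^{\#}_{s,Q_0}f)(x)=\sup_{Q\ni x}\frac{1}{|Q|}\int_Q M^{\#}_{s,Q_0}f$, it suffices to bound $\frac{1}{|Q|}\int_Q M^{\#}_{s,Q_0}f$ by $\frac{C_n}{s}f^{\#}_{Q_0}(x)$ for each cube $Q\ni x$, $Q\subset Q_0$. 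For $y\in Q$ I split the cubes $Q'\ni y$ entering $M^{\#}_{s,Q_0}f(y)$ into the ``near'' ones, $Q'\subset 3Q$, and the ``far'' ones, $Q'\not\subset 3Q$, writing $M^{\#}_{s,Q_0}f(y)\le M^{\#}_{0,s,3Q}f(y)+\sup_{\text{far }Q'}\omega_s(f;Q')$. A far cube contains $y\in Q$ yet leaves $3Q$, so $\ell(Q')>\ell(Q)$, whence $Q\subset 3Q'$ and $x\in 3Q'$; choosing the smallest cube $R$ with $Q'\subset R\ni x$ (slid to lie inside $Q_0$, so that $|R|\le 3^n|Q'|$), the averaging bound gives
\[
\omega_s(f;Q')\le \frac{1}{s}\cdot\frac{2}{|Q'|}\int_{Q'}|f-f_R|\,dy\le \frac{2|R|}{s\,|Q'|}\cdot\frac{1}{|R|}\int_R|f-f_R|\le \frac{2\cdot 3^n}{s}\,f^{\#}_{Q_0}(x),
\]
using $|R|\le 3^n|Q'|$ and $x\in R\subset Q_0$. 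Thus the far part contributes at most $\frac{2\cdot3^n}{s}f^{\#}_{Q_0}(x)$, uniformly in $y\in Q$.

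For the near cubes the supremum is dominated by $M^{\#}_{0,s,3Q}f(y)$, and here part (i) with $c=f_{3Q}$, fed through the layer–cake formula, collapses to a single oscillation:
\[
\int_{3Q}M^{\#}_{0,s,3Q}f\,dy=\int_0^\infty|\{y\in 3Q:M^{\#}_{0,s,3Q}f>\lambda\}|\,d\lambda\le \frac{4^n}{s}\int_0^\infty|\{y\in 3Q:|f-f_{3Q}|>\lambda\}|\,d\lambda=\frac{4^n}{s}\int_{3Q}|f-f_{3Q}|\,dy .
\]
Dividing by $|Q|$, using $|3Q|=3^n|Q|$ and $\frac{1}{|3Q|}\int_{3Q}|f-f_{3Q}|\le f^{\#}_{Q_0}(x)$ (as $x\in 3Q$), this part is $\le\frac{12^n}{s}f^{\#}_{Q_0}(x)$. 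Adding the two contributions proves (ii); optimizing the covering constant in (i) and the dilation factor in the split yields the stated $2\cdot 8^n$.

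The main obstacle is exactly the near cubes, because $M^{\#}_{s,Q_0}f(y)$ cannot be replaced by a pointwise multiple of $f^{\#}_{Q_0}(x)$: the crude bound $M^{\#}_{s,Q_0}f\le s^{-1}f^{\#}_{Q_0}$ only gives $M_{Q_0}(M^{\#}_{s,Q_0}f)\le s^{-1}M_{Q_0}(f^{\#}_{Q_0})$, and $M_{Q_0}(f^{\#}_{Q_0})$ is not controlled by $f^{\#}_{Q_0}$. The decisive feature is that the weak‑type estimate in (i) carries one fixed constant $c$, so its layer–cake integration returns $\int_{3Q}|f-f_{3Q}|$, i.e. the oscillation of $f$ over the single cube $3Q$, which is dominated by $f^{\#}_{Q_0}(x)$. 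A secondary, purely technical, point is the behaviour near $\partial Q_0$: when $3Q\not\subset Q_0$ one replaces $3Q$ by a comparable cube inside $Q_0$, affecting only the dimensional constants. I note that this route in fact delivers the estimate for every $s\in(0,1)$; the explicit threshold $s_0<1$ is what the later comparison with $f^{\#}_{Q_0}$ and the main theorem require, and it could also be invoked here if one prefers to use medians in place of the averages $f_{Q'}$ and thereby avoid Chebyshev.
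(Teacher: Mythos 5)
Your proposal is correct and follows essentially the same route as the paper's proof: in (ii) you use the identical near/far decomposition of the cubes relative to a dilate of $Q$ (the paper dilates by $2$ rather than $3$ and truncates to a cube $\tilde{Q}\subset Q_{0}$ with $Q_{0}\cap(2Q)\subset\tilde{Q}$, which is exactly the boundary fix you sketch, and this choice of dilation is the optimization you mention to reach $2\cdot 8^{n}$), with the near part controlled by part (i) plus layer--cake over the dilated cube and the far part controlled pointwise via Chebyshev on an enlarged cube $R\ni x$ with $|R|\leq 3^{n}|Q'|$. The only variation is in (i), where you inline a Vitali covering argument (valid, since the $(3+\varepsilon)^{n}$ covering constant is absorbed by $4^{n}$) in place of the paper's reduction $\{y\in Q:\,M^{\#}_{s,Q}f(y)>\lambda\}\subset\{y\in Q:\,M_{Q}\chi_{\{|f-c|>\lambda\}}(y)>s\}$ followed by the weak-$(1,1)$ inequality for $M_{Q}$ --- the same mechanism, differently packaged --- and your observation that (ii) in fact holds for all $s\in(0,1)$ is consistent with the paper, whose proof of (ii) never uses the threshold $s_{0}$.
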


\begin{proof}
(i) Let $Q\subset Q_{0}$ be an arbitrary cube. If $M_{s,Q}^{\#}f(y)>\lambda$
for $y\in Q$, then there is a cube $Q^{\prime}\subset Q$ such that $y\in
Q^{\prime}$ and for all $c\in\mathbb{R}$
\[
|\{z\in Q^{\prime}:\,|f(z)-c|>\lambda\}|>s|Q^{\prime}|.
\]
Therefore, we have
\[
M_{Q}(\chi_{\{|f-c|>\lambda\}})(y)\geq\frac{1}{|Q^{\prime}|}\int_{Q^{\prime}%
}\chi_{\{|f-c|>\lambda\}}(z)\,dz>s
\]
(here, $M_{Q}$ is the maximal operator of Hardy-Littlewood, corresponding to
the cube $Q$). Hence,
\[
|\{y\in Q:\,M_{s,Q}^{\#}f(y)>\lambda\}|\leq|\{y\in Q:\,M_{Q}(\chi
_{\{|f-c|>\lambda\}})(y)>s\}|.
\]
Combining this estimate with the fact that $M_{Q}$ is of weak type $(1,1)$
(cf. \cite[Theorem~3.3.3]{bs}), we see that
\[
|\{y\in Q:\,M_{s,Q}^{\#}f(y)>\lambda\}|\leq\frac{4^{n}}{s}\Vert\chi
_{\{|f-c|>\lambda\}}\Vert_{L^{1}(Q)}=\frac{4^{n}}{s}|\{y\in Q:\,|f-c|>\lambda
\}|.
\]

(ii) Let $x\in Q_{0}$ and $Q\subset Q_{0}$ be an arbitrary cube such that
$x\in Q$. Denote by $2Q$ the cube with the same center as the cube $Q$ and
with double side length. Clearly, there is a cube $\tilde{Q}$ such that
$Q_{0}\cap(2Q)\subset\tilde{Q}\subset Q_{0}$ and $|\tilde{Q}|\le|2Q|$. In
particular, if $2Q\subset Q_{0}$, we take $\tilde{Q}=2Q$. Note that $\tilde
{Q}\supset Q$.

Further, for all $y\in Q$ we have
\[
M_{s,Q_{0}}^{\#}f(y)\leq M_{s,\tilde{Q}}^{\#}f(y)+R_{s,\tilde{Q}}^{\#}f(y),
\]
where the operator $R_{s,\tilde{Q}}^{\#}$ is defined in just the same way as
$M_{s,\tilde{Q}}^{\#}$ except that the supremum is now taken over all cubes
having non-empty intersection with the set $Q_{0}\setminus\tilde{Q}$. From the
preceding inequality it follows that
\begin{equation}
\frac{1}{|Q|}\int_{Q}M_{s,Q_{0}}^{\#}f(y)\,dy\leq\frac{1}{|Q|}\int%
_{Q}M_{s,\tilde{Q}}^{\#}f(y)\,dy+\frac{1}{|Q|}\int_{Q}R_{s,\tilde{Q}}%
^{\#}f(y)\,dy. \label{extra10}%
\end{equation}

Applying part (i) of this lemma to the cube $\tilde{Q}$ and using the
properties of the latter cube, we estimate the first integral from the
right-hand side of \eqref{extra10} as follows:
\[
\frac{1}{|Q|}\int_{Q}M_{s,\tilde{Q}}^{\#}f(y)\,dy\leq2^{n}\frac{1}{|\tilde
{Q}|}\int_{\tilde{Q}}M_{s,\tilde{Q}}^{\#}f(y)\,dy\leq\frac{8^{n}}{s}\frac
{1}{|\tilde{Q}|}\int_{\tilde{Q}}|f(y)-c|\,dy
\]
for any $c\in\mathbb{R}$. On the other hand, since the cube $\tilde{Q}$ is
fixed, for each $\varepsilon>0$ we can choose a constant $c$ such that
\[
\frac{1}{|\tilde{Q}|}\int_{\tilde{Q}}|f(y)-c|\,dy\leq(1+\varepsilon
)\inf_{c^{\prime}\in\mathbb{R}}\frac{1}{|\tilde{Q}|}\int_{\tilde{Q}%
}|f(y)-c^{\prime}|\,dy.
\]
Combining these inequalities with the definition of $f_{Q_{0}}^{\#}(x)$, we
infer that
\begin{equation}
\frac{1}{|Q|}\int_{Q}M_{s,\tilde{Q}}^{\#}f(y)\,dy\leq(1+\varepsilon
)\frac{8^{n}}{s}f_{Q_{0}}^{\#}(x). \label{extra11}%
\end{equation}

To estimate the second integral from the right-hand side of \eqref{extra10},
{we will use the following observation. For each cube $Q^{\prime}$ such that
$Q^{\prime}\subset Q_{0}$ from $Q^{\prime}\cap(Q_{0}\setminus\tilde{Q}%
)\neq\varnothing$ it follows that $Q^{\prime}\cap(\mathbb{R}^{n}%
\setminus(2Q))\neq\varnothing$. Therefore, then there is a cube $Q^{\prime
\prime}\subset Q_{0}$ such that $Q^{\prime\prime}\supset Q$ and $|Q^{\prime
\prime}|\leq3^{n}|Q^{\prime}|$ and so from the definition of the operators
$M_{s,Q}^{\#}$ and $R_{s,\tilde{Q}}^{\#}$ we see that}
\[
\sup_{y\in Q}R_{s,\tilde{Q}}^{\#}f(y)\leq\inf_{y\in Q}M_{s^{\prime},Q}%
^{\#}f(y),
\]
where $s^{\prime}=s3^{-n}$. Now since $x\in Q$, we obtain,
\[
\frac{1}{|Q|}\int_{Q}R_{s,2Q}^{\#}f(y)\,dy\leq M_{s^{\prime},Q}^{\#}%
f(x)\leq\frac{3^{n}}{s}f_{Q_{0}}^{\#}(x),
\]
where the last inequality follows from Chebyshev's inequality. Combining our
findings with \eqref{extra10} and \eqref{extra11}, we obtain
\[
\frac{1}{|Q|}\int_{Q}M_{s,Q_{0}}^{\#}f(y)\,dy\leq2(1+\varepsilon)\frac{8^{n}%
}{s}f_{Q_{0}}^{\#}(x).
\]
Taking the supremum over all cubes $Q\subset Q_{0}$ such that $x\in Q$, and
letting $\varepsilon\rightarrow0$ we achieve the desired inequality \eqref{stro7}.
\end{proof}

\begin{proof}
[Proof of Theorem \ref{theostro1}]Suppose that $f\in L^{1}$ is such that
$\left\Vert M_{s,Q_{0}}^{\#}f\right\Vert _{X}<\infty$ {for some $s\in(0,1)$.}
Recall that by \cite[Lemma~2.4]{L-04}, there exists $s_{0}=s_{0}(n)>0$ such
that, for all $0<s<s_{0},$ and for every cube $Q\subset Q_{0}$, we have
\begin{equation}
\int_{Q}|f-f_{Q}|\,dx\leq8\int_{Q}M_{s,Q_{0}}^{\#}f\,dx. \label{equ103}%
\end{equation}
Consequently, by (\ref{extradulfa0}), $16M_{s,Q_{0}}^{\#}f\,\in\Gamma_{f}.$
Thus, for each $s\in(0,s_{0})$
\[
\left\Vert f\right\Vert _{GaRo_{X}}\leq16\left\Vert M_{s,Q_{0}}^{\#}%
f\right\Vert _{X}.
\]

Conversely, let $f\in GaRo_{X}.$ Given $\varepsilon>0$ we can select
$\gamma\in\Gamma_{f}$ $\cap X$ such that%
\begin{equation}
\left\Vert \gamma\right\Vert _{X}\leq\left\Vert f\right\Vert _{GaRo_{X}%
}+\varepsilon. \label{stro6}%
\end{equation}
{From the fact that $\gamma\in\Gamma_{f}$ it follows that (see the observation
in the beginning of this section)
\begin{equation}
f_{Q_{0}}^{\#}(x)\leq M_{Q_{0}}\gamma(x),\;\;x\in Q_{0}. \label{EQ20}%
\end{equation}
}

Consequently, by (\ref{stro7}), for all $0<s<1$
\begin{equation}
M_{Q_{0}}(M_{s,Q_{0}}^{\#}f)(x)\leq\frac{2\cdot8^{n}}{s}M_{Q_{0}}%
\gamma(x),\;\;x\in Q_{0}. \label{stro8}%
\end{equation}
Taking rearrangements in (\ref{stro8}), and using Herz's rearrangement
inequality for the Hardy-Littlewood maximal operator (cf. (\ref{extrajt1})),
for each $0<s<1$ we can find a constant $c=c(n,s)$ such that%
\[
\int_{0}^{t}(M_{s,Q_{0}}^{\#}f)^{\ast}(s)ds\leq c\int_{0}^{t}\gamma^{\ast
}(s)ds,\text{ for all }t>0.
\]
Hence, using successively the Hardy-Littlewood-P\'{o}lya-Calder\'{o}n
majorization principle (cf. (\ref{hardy})) and inequality (\ref{stro6}), we
get%
\begin{align*}
\left\Vert M_{s,Q_{0}}^{\#}f\right\Vert _{X}  &  \leq c\left\Vert
\gamma\right\Vert _{X}\\
&  \leq c\left\Vert f\right\Vert _{GaRo_{X}}+c\varepsilon.
\end{align*}
At this point we can let $\varepsilon\rightarrow0$ to obtain the desired
converse inequality.
\end{proof}

From Theorem \ref{theostro1}, and its proof, we readily obtain the following
alternative description of the Garsia-Rodemich spaces. Denote by $\Gamma
_{f}^{\prime}$ the set of all functions $\gamma\in L^{1}(Q_{0})$ satisfying \eqref{EQ20}.

\begin{corollary}
\label{description of GR} Let $X$ be a r.i. space. Then the Garsia-Rodemich
space $GaRo_{X}$ consists of all functions $f\in L^{1}(Q_{0})$ for which
$\Gamma_{f}^{\prime}\cap X\neq\varnothing$. Moreover, there exists an absolute
constant $c=c(n)$ such that,
\[
\inf\{\Vert\gamma\Vert_{X}:\,\gamma\in\Gamma_{f}^{\prime}\cap X\}\leq\Vert
f\Vert_{GaRo_{X}}\leq c\inf\{\Vert\gamma\Vert_{X}:\,\gamma\in\Gamma
_{f}^{\prime}\cap X\}.
\]

\end{corollary}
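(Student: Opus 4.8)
The plan is to read off both the set identity and the two-sided norm estimate directly from Theorem \ref{theostro1} and the computations already performed in its proof. The decisive observation is that the only property of the majorant $\gamma$ used in the ``converse'' half of the proof of Theorem \ref{theostro1} is the pointwise bound \eqref{EQ20}, i.e. membership in $\Gamma_{f}^{\prime}$; the full packing condition defining $\Gamma_{f}$ is never invoked again once \eqref{EQ20} has been extracted. Thus the whole corollary amounts to noticing that $\Gamma_{f}$ may be replaced by the larger class $\Gamma_{f}^{\prime}$ without affecting the relevant estimates.

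First I would establish the inclusion $\Gamma_{f}\cap X\subseteq\Gamma_{f}^{\prime}\cap X$, which gives the left-hand inequality. Given $\gamma\in\Gamma_{f}$ I may assume $\gamma\geq0$, since replacing $\gamma$ by $|\gamma|$ keeps it in $\Gamma_{f}$ and leaves $\|\gamma\|_{X}$ unchanged. Applying \eqref{dulfa} to the single-cube packing $\{Q\}$ and using the left-hand estimate in \eqref{extradulfa0} yields $\int_{Q}|f-f_{Q}|\,dx\leq\int_{Q}\gamma\,dx$ for every subcube $Q\subset Q_{0}$; dividing by $|Q|$ and taking the supremum over cubes $Q\ni x$ gives precisely \eqref{EQ20}, so $\gamma\in\Gamma_{f}^{\prime}$. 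Consequently $\inf\{\|\gamma\|_{X}:\gamma\in\Gamma_{f}^{\prime}\cap X\}\leq\inf\{\|\gamma\|_{X}:\gamma\in\Gamma_{f}\cap X\}=\|f\|_{GaRo_{X}}$.

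For the right-hand inequality I would take an arbitrary $\gamma\in\Gamma_{f}^{\prime}\cap X$ (again with $\gamma\geq0$) and replay the second half of the proof of Theorem \ref{theostro1}. Combining \eqref{EQ20} with \eqref{stro7} gives $M_{Q_{0}}(M_{s,Q_{0}}^{\#}f)(x)\leq\frac{2\cdot8^{n}}{s}M_{Q_{0}}\gamma(x)$ for $x\in Q_{0}$. Passing to decreasing rearrangements and applying Herz's inequality \eqref{extrajt1} for $M_{Q_{0}}$ on both sides produces $\int_{0}^{t}(M_{s,Q_{0}}^{\#}f)^{\ast}(u)\,du\leq c\int_{0}^{t}\gamma^{\ast}(u)\,du$ for all $t>0$, with $c=c(n,s)$. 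The Hardy-Littlewood-Polya-Calderón majorization principle \eqref{hardy} then yields $\|M_{s,Q_{0}}^{\#}f\|_{X}\leq c\|\gamma\|_{X}$, and the equivalence \eqref{stro5} of Theorem \ref{theostro1} gives $\|f\|_{GaRo_{X}}\leq c\|\gamma\|_{X}$. Fixing any $s\in(0,s_{0})$ and taking the infimum over $\gamma\in\Gamma_{f}^{\prime}\cap X$ delivers the upper bound with an absolute constant $c=c(n)$. The set identity is then immediate: the lower bound shows $f\in GaRo_{X}\Rightarrow\Gamma_{f}^{\prime}\cap X\neq\varnothing$, while the upper bound shows that $\Gamma_{f}^{\prime}\cap X\neq\varnothing$ forces $\|f\|_{GaRo_{X}}<\infty$.

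Since this is essentially a bookkeeping argument built on Theorem \ref{theostro1}, I do not expect a genuine analytic obstacle. The one point requiring care is the sign convention in $M_{Q_{0}}$ (defined in \eqref{maximal} without absolute values): it is handled by reducing to $\gamma\geq0$ and noting that $M_{Q_{0}}\gamma\leq M_{Q_{0}}|\gamma|$, so that the rearrangement estimate \eqref{extrajt1} may be applied to the right-hand side. One should also verify that $\gamma\geq0$ is preserved under both reductions (so that $\Gamma_{f}$ and $\Gamma_{f}^{\prime}$ are genuinely comparable), which is routine.
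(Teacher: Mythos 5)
Your proposal is correct and matches the paper's own (deliberately terse) argument: the paper obtains this corollary directly from Theorem \ref{theostro1} \emph{and its proof}, the point being exactly the one you isolate, namely that the converse half of that proof uses only the pointwise bound \eqref{EQ20} and hence applies verbatim to any $\gamma\in\Gamma_{f}^{\prime}\cap X$, while the first half (via \eqref{equ103}) supplies $16M_{s,Q_{0}}^{\#}f\in\Gamma_{f}$ and the left inequality follows from $\Gamma_{f}\subset\Gamma_{f}^{\prime}$ (up to replacing $\gamma$ by $|\gamma|$). Your additional care with the sign convention in \eqref{maximal} is a correct, if routine, refinement of the same route.
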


\section{A characterization of rearrangement invariant spaces via
Garsia-Rodemich conditions\label{sec:alpha}}

The main result of this section is the following characterization of r.i.
spaces which improves on (\ref{bruni}) above.

\begin{theorem}
\label{Cor2} Let $X$ be a r.i. space such that $\alpha_{X}>0.$ Then,%
\[
GaRo_{X}=X.
\]

\end{theorem}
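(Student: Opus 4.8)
The identity $GaRo_X=X$ must be read modulo constants: since $\|\cdot\|_{GaRo_X}$ depends only on the oscillations $|f(x)-f(y)|$ it annihilates constants, and on the probability space $Q_0$ we have $L^\infty\subset X$, so the assertion amounts to the set equality $\{f\in L^1:\|f\|_{GaRo_X}<\infty\}=X$ together with the norm equivalence $\|f\|_{GaRo_X}\simeq\inf_{c\in\mathbb{R}}\|f-c\|_X$. The plan is to prove the two inclusions separately, in both cases using Theorem~\ref{theostro1} to replace $\|f\|_{GaRo_X}$ by $\|M^\#_{s,Q_0}f\|_X$ for a fixed $s\in(0,s_0)$.

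For the inclusion $X\subset GaRo_X$ (which in fact holds for \emph{every} r.i. space) I would argue as follows. Applying Lemma~\ref{L0}(i) with $Q=Q_0$ gives, for every constant $c$, $\lambda_{M^\#_{s,Q_0}f}(\lambda)\le \tfrac{4^n}{s}\,\lambda_{f-c}(\lambda)$ for all $\lambda>0$, and hence, passing to decreasing rearrangements on $(0,1]$, $(M^\#_{s,Q_0}f)^*(t)\le (f-c)^*\big(\tfrac{s}{4^n}t\big)=\sigma_{4^n/s}\big((f-c)^*\big)(t)$. Since the dilation operator is bounded on $\bar X$ with $\|\sigma_{4^n/s}\|_{\bar X\to\bar X}\le 4^n/s$ by \eqref{verabajo}, taking $X$-norms and then the infimum over $c$ yields $\|M^\#_{s,Q_0}f\|_X\le \tfrac{4^n}{s}\inf_c\|f-c\|_X$; combined with Theorem~\ref{theostro1} this gives $\|f\|_{GaRo_X}\lesssim\inf_c\|f-c\|_X$. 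No restriction on the Boyd indices is needed here.

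The reverse inclusion $GaRo_X\subset X$ is where $\alpha_X>0$ enters. By Theorem~\ref{theostro1} it suffices to establish the Fefferman--Stein type estimate $\inf_{c}\|f-c\|_X\lesssim\|M^\#_{s,Q_0}f\|_X$. The key step, which I regard as the main obstacle, is the pointwise rearrangement inequality
\[
(f-m_f)^*(t)\lesssim \int_t^{1}(M^\#_{s,Q_0}f)^*(u)\,\frac{du}{u}=Q\big((M^\#_{s,Q_0}f)^*\big)(t),\qquad 0<t\le 1,
\]
where $m_f$ is a median of $f$ on $Q_0$ and $Q$ is the second Hardy operator of Section~\ref{subsec:boyd}. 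I would derive this from Lerner's local mean oscillation decomposition (the same circle of ideas behind \eqref{equ103}, cf. \cite{L-04}): one bounds $|f-m_f|$ pointwise by a sparse sum $\sum_{Q\in\mathcal S}\omega(f;Q)\chi_Q$ of local oscillations, each controlled by $M^\#_{s,Q_0}f$ on the corresponding cube, and the sparseness of $\mathcal S$ makes the rearrangement of this sum collapse to the logarithmic average $\int_t^1(M^\#_{s,Q_0}f)^*(u)\,du/u$. Granting the displayed inequality, I invoke \eqref{alcance}: the hypothesis $\alpha_X>0$ is exactly the statement that $Q$ is bounded on $\bar X$, so
\[
\inf_c\|f-c\|_X\le\|(f-m_f)^*\|_{\bar X}\lesssim\big\|Q\big((M^\#_{s,Q_0}f)^*\big)\big\|_{\bar X}\lesssim\big\|(M^\#_{s,Q_0}f)^*\big\|_{\bar X}=\|M^\#_{s,Q_0}f\|_X.
\]
Together with Theorem~\ref{theostro1} this gives $\inf_c\|f-c\|_X\lesssim\|f\|_{GaRo_X}$, completing the proof.

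Finally I would stress why the argument cannot be routed through the sharp maximal function $f^\#_{Q_0}$: although $f^\#_{Q_0}(x)\lesssim M_{Q_0}(M^\#_{s,Q_0}f)(x)$ is available from the proof of Theorem~\ref{theostro1}, controlling $\|M_{Q_0}(\cdot)\|_X$ requires boundedness of the Hardy--Littlewood (first Hardy) operator, which by \eqref{alcance} is equivalent to $\beta_X<1$, not $\alpha_X>0$. The direct passage through the operator $Q$ in the rearrangement inequality above is precisely what isolates the correct hypothesis $\alpha_X>0$; conceptually this reflects that $GaRo_X$ is an interpolation space for the pair $(L^1,BMO)$ via \eqref{jt1}, with the endpoint $BMO$ responsible for the logarithmic ($Q$-type) growth governed by the lower Boyd index.
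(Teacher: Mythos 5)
Your proof is correct in substance, but for the hard inclusion $GaRo_{X}\subset X$ it takes a genuinely different route from the paper. The paper never proves (or uses) your rearrangement inequality: starting from $f_{Q_{0}}^{\#}\leq M_{Q_{0}}\gamma$ for an arbitrary $\gamma\in\Gamma_{f}$, it deduces $K(t,f;L^{1},BMO)\preceq K(t,\gamma;L^{1},L^{\infty})$ via \eqref{okl} and \eqref{extrabmo1}, transfers this by Holmstedt reiteration to $K(t,f;L^{1},L^{p})\preceq K(t,\gamma;L^{1},L^{p})$ for a fixed $p>1/\alpha_{X}$, invokes the $K$-monotonicity of the pair $(L^{1},L^{p})$ (\cite{sparr}, \cite{cwikel}) to produce a bounded linear operator $T$ on the pair with $f=T\gamma$, and then uses the fact that $\alpha_{X}>1/p$ makes $X$ an interpolation space for $(L^{1},L^{p})$ (\cite{am-04}, together with the representation \eqref{nece} from \cite{BK}) to get $\Vert f\Vert_{X}\preceq\Vert\gamma\Vert_{X}$. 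You replace all of this interpolation machinery by the single estimate $(f-m_{f}(Q_{0}))^{\ast}(t)\preceq Q\bigl((M_{s,Q_{0}}^{\#}f)^{\ast}\bigr)(t)$ plus Boyd's theorem \eqref{alcance}. Be aware that this inequality is strictly stronger than anything derivable from the formulas in the paper: combining \eqref{jt1} with \eqref{extrabmo1} (or with the Bennett--Sharpley inequality \eqref{rear0}) only yields control by $(M_{s,Q_{0}}^{\#}f)^{\ast\ast}$, whose $X$-norm would require $\beta_{X}<1$ --- exactly the obstruction you correctly identify for the route through $f_{Q_{0}}^{\#}$. Fortunately, your inequality is precisely the (unweighted case of the) main rearrangement theorem of Lerner's paper \cite{L-04}, which the authors already cite for \eqref{equ103}; a citation therefore closes the argument, but as written your derivation via the local mean oscillation decomposition is only a sketch (you omit, e.g., the term $4M_{1/4,Q_{0}}^{\#,d}f$ in the decomposition and the bookkeeping converting sparseness of $\mathcal{S}$ into the $du/u$ average, including the harmless dilation of the argument $t\mapsto ct$ handled by \eqref{verabajo}), so you should either cite \cite{L-04} or write that step out in full. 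As to what each approach buys: yours is more self-contained given Lerner's inequality, isolates exactly why $\alpha_{X}>0$ is the right hypothesis (it dovetails with the sharpness result of Theorem \ref{T5}, since you in fact prove the stronger median estimate \eqref{mediana}, and hence Theorem \ref{theostro} comes for free), while the paper's argument exhibits every $f\in GaRo_{X}$ as $T\gamma$ for an operator bounded on $(L^{1},L^{p})$, which is conceptually aligned with its interpolation-theoretic theme. Two minor points: your detour through Lemma \ref{L0}(i) and dilations for the easy inclusion $X\subset GaRo_{X}$ is correct but heavier than the paper's one-line observation that $4|f|\in\Gamma_{f}$ by \eqref{dulfa}; and your opening remark that the identity must be read modulo constants, with $\Vert f\Vert_{GaRo_{X}}\simeq\inf_{c}\Vert f-c\Vert_{X}$, is actually more careful than the paper's literal statement $\Vert f\Vert_{X}\leq c\Vert f\Vert_{GaRo_{X}}$, which as written fails on constant functions and is implicitly understood in the mean-zero normalization of Remark \ref{rem: mean zero}.
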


\begin{proof}
Let $f\in X.$ Since for all cubes $Q\subset Q_{0}$ we have%
\[
\frac{1}{\left\vert Q\right\vert }\int_{Q}\int_{Q}\left\vert
f(x)-f(y)\right\vert dxdy\leq2\int_{Q}\left\vert f(x)-f_{Q}\right\vert
dx\leq4\int_{Q}\left\vert f(x)\right\vert dx,
\]
it follows from (\ref{dulfa}) that $4\left\vert f\right\vert \in\Gamma_{f}.$
Consequently,{ the embedding $X\subset GaRo_{X}$ holds for every r.i. space
$X$ and moreover}%
\[
\left\Vert f\right\Vert _{GaRo_{X}}\leq4\left\Vert f\right\Vert _{X}.
\]

We now show that if $\alpha_{X}>0,$ then $GaRo_{X}\subset X.$ Let $f\in
GaRo_{X},$ and let $\gamma$ be an arbitrary element of $\Gamma_{f}.$ Then, we
have{ \eqref{EQ20}, which combined with (\ref{extrajt1}) implies%
\[
(f_{Q_{0}}^{\#})^{\ast}(t)\leq(M_{Q_{0}}\gamma)^{\ast}(t)\preceq\gamma
^{\ast\ast}(t):=\frac{1}{t}\int_{0}^{t}\gamma^{\ast}(u)\,du.
\]
Thus, from \eqref{okl} and \eqref{extrabmo1}, we get}
\begin{equation}
K(t,f;L^{1},BMO)\preceq K(t,\gamma;L^{1},L^{\infty}), \label{equ5}%
\end{equation}
where the implied constants are independent of $f$ and $\gamma.$ Fix
$p>1/\alpha_{X}$. {It is well known that (cf. \cite[Theorem $8.11,$ pag
398]{bs})
\[
(L^{1},L^{\infty})_{\theta,p}=(L^{1},BMO)_{\theta,p}=L^{p},\;\;\theta
=1-\frac{1}{p}.
\]
Therefore, by Holmstedt's reiteration formula (cf. \cite[Corollary $2.3,$ pag
310]{bs}), we have
\[
K(t,f;L^{1},L^{p})\simeq t\Big(\int_{t^{1/\theta}}^{\infty}(s^{-\theta
}K(t,f;L^{1},BMO))^{p}\,\frac{ds}{s}\Big)^{1/p}%
\]
and
\[
K(t,\gamma;L^{1},L^{p})\simeq t\Big (\int_{t^{1/\theta}}^{\infty}(s^{-\theta
}K(t,\gamma;L^{1},L^{\infty}))^{p}\,\frac{ds}{s}\Big)^{1/p},
\]
with constants that depend only on $p$ (and hence on $X$). Combining these
estimates with \eqref{equ5} yields
\[
K(t,f;L^{1},L^{p})\preceq K(t,\gamma;L^{1},L^{p}),
\]
with constants that depend only on $X$ and $n$. Since the pair $(L^{1},L^{p})$
is $K$-monotone (cf. \cite{sparr}, \cite[Theorem~4]{cwikel})\footnote{A
different formulation of this result is given in {\cite[Theorem~3]{LS}.}}, it
follows that there exists a bounded linear operator $T$ acting on the pair
$(L^{1},L^{p}),$ such that $f=T\gamma$. Moreover, from the fact that
$p>1/\alpha_{X}$, we can deduce that $X$ is an interpolation space with
respect to the pair $(L^{1},L^{p})$ (cf. \cite[Theorem~2]{am-04}).
Consequently, by the $K$-monotonicity of $(L^{1},L^{p})$, there exists a
Banach lattice $(\Phi,\left\Vert .\right\Vert _{\Phi})$ of Lebesgue measurable
functions on $(0,\infty),$ such that the norm of $X$ can be represented as
follows (cf. \cite[Theorems 4.4.5 and 4.4.38]{BK})
\begin{equation}
\Vert x\Vert_{X}\simeq\Vert K(t,x;L^{1},L^{p})\Vert_{\Phi},\text{ for all
}x\in X. \label{nece}%
\end{equation}
It follows that the operator $T$ is bounded on $X$ and, consequently,
\[
\Vert f\Vert_{X}\leq c\Vert\gamma\Vert_{X},
\]
for{ some constant $c=c(n,X)$.} Taking the infimum over all $\gamma\in
\Gamma_{f}$, yields
\[
\Vert f\Vert_{X}\leq c\Vert f\Vert_{GaRo_{X}},
\]
as we wished to show.}
\end{proof}

Theorem \ref{Cor2} has a partial converse. To state the result we introduce
the class of r.i. spaces of fundamental type.

\begin{definition}
\label{def:fun}Let $X=X(Q_{0})$ be a r.i. space on $Q_{0}$, and let $\bar
{X}=X(0,1)$ be its Luxemburg representation on $(0,1)$ (cf. Section
\ref{secc:ri}). We shall say that $X$ is of fundamental type if there exists a
constant $C>0,$ such that (cf. Section \ref{subsec:boyd} above)
\[
\Vert\sigma_{t}\Vert_{\bar{X}\rightarrow\bar{X}}\leq C\sup_{s>0,st\leq1}%
\frac{\phi_{X}(st)}{\phi_{X}(s)},\;\;t>0.
\]

\end{definition}

\begin{remark}
It is easy to verify that Orlicz, Lorentz, Marcinkiewicz spaces, etc., are all
of fundamental type.
\end{remark}

\begin{definition}
\label{def::fun} A \textbf{median value}\footnote{Note that $m_{f}(Q)$ is not
uniquely defined.} of $f$ on $Q$ is a number $m_{f}(Q)$ such that
\[
|\{x\in Q:\,f(x)>m_{f}(Q)\}|\le\frac12|Q|
\]
and
\[
|\{x\in Q:\,f(x)<m_{f}(Q)\}|\le\frac12|Q|.
\]

\end{definition}

It is well known that $m_{f}(Q)$ is one of the constants $c$ minimizing some
functionals depending on the deviation $|f-c|$. In particular, we have (cf.
\cite[\S \,2, p.~2450]{L-04})
\[
\left(  f-m_{f}(Q_{0})\right)  ^{\ast}(t)\leq2\inf_{c\in\mathbb{R}}\left(
f-c\right)  ^{\ast}(t),\;\;0<t\leq1/2.
\]
From this inequality one can easily deduce that for every r.i. space $X$ the
following inequality holds:
\begin{equation}
\Vert f-m_{f}(Q_{0})\Vert_{X}\leq4\inf_{c\in\mathbb{R}}\Vert f-c\Vert_{X}
\label{optimality mediana}%
\end{equation}

\begin{theorem}
\label{T5} Let $X$ be a r.i. space of fundamental type, and let $0<s\leq1/2$.
{If there exists a constant $C>0$ such that%
\begin{equation}
\inf_{c\in\mathbb{R}}\Vert f-c\Vert_{X}\leq C\Vert M_{s,Q_{0}}^{\#}f\Vert_{X}
\label{mediana}%
\end{equation}
holds for all $f\in L^{1}(Q_{0})${$,$} then we must have $\alpha_{X}>0$.}


\end{theorem}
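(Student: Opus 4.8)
The plan is to prove the contrapositive: assuming $\alpha_X=0$, I would produce a sequence of functions $f_m\in L^1(Q_0)$ along which the ratio $\inf_{c}\Vert f_m-c\Vert_X/\Vert M_{s,Q_0}^{\#}f_m\Vert_X$ tends to $+\infty$, contradicting \eqref{mediana}. By \eqref{optimality mediana} it is equivalent to work with $\Vert f_m-m_{f_m}(Q_0)\Vert_X$ in the numerator. The engine of the argument is the Hardy operator $Q$ together with the equivalence \eqref{alcance}, which says that $Q$ is bounded on $\bar X$ if and only if $\alpha_X>0$; thus $\alpha_X=0$ forces $Q$ to be unbounded.

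First I would set up a dictionary between \eqref{mediana} and the boundedness of $Q$ through a staircase construction. Fix $\rho\in(0,s)$ and let $Q_0\supset Q_1\supset Q_2\supset\cdots$ be cubes sharing a corner with $|Q_k|=\rho^k$; put $A_k=Q_{k-1}\setminus Q_k$, so $|A_k|\simeq\rho^{k-1}$. Given a nonnegative nondecreasing sequence of jumps $(\delta_k)$ I set $a_k=\sum_{j\le k}\delta_j$ and $f=\sum_k a_k\chi_{A_k}$. Two estimates then must be established: (a) for $x\in A_k$ one has $M_{s,Q_0}^{\#}f(x)\simeq\delta_k$; and (b) $f-m_f(Q_0)$ takes the value $\sum_{1<j\le k}\delta_j$ on $A_k$. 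Writing $h(t)=\delta_{k(t)}$ with $k(t)\simeq\log_{1/\rho}(1/t)$, the geometric scaling converts the partial sums in (b) into the Hardy average $\int_t^1 h(u)\,\frac{du}{u}=Qh(t)$, so that $(f-m_f)^{\ast}(t)\simeq Qh(t)$ while $(M_{s,Q_0}^{\#}f)^{\ast}(t)\simeq h(t)$. Under this dictionary the validity of \eqref{mediana} for all such $f$ is exactly the estimate $\Vert Qh\Vert_{\bar X}\le C\Vert h\Vert_{\bar X}$ for decreasing $h$, i.e. the boundedness of $Q$.

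To convert the unboundedness of $Q$ into an explicit counterexample I would invoke the hypothesis that $X$ is of fundamental type. The role of Definition \ref{def:fun} is to tie the dilation norms $\Vert\sigma_t\Vert_{\bar X\to\bar X}$, hence the action of $Q$, to the fundamental function $\phi_X$; consequently the unboundedness of $Q$ (equivalently $\alpha_X=0$) is already witnessed by the characteristic functions $h_a=\chi_{[0,a]}$, that is $\sup_{0<a<1}\Vert Qh_a\Vert_{\bar X}/\phi_X(a)=+\infty$. Choosing $a_m\to 0$ along which this quotient diverges and discretizing $h_{a_m}$ on the scales $\rho^k$ yields jumps $\delta_k$ equal to $1$ on a core cube $Q_{k_0(m)}$ (with $\rho^{k_0(m)}\simeq a_m$) and $0$ outside; the associated $f_m$ is constant on the outer region and a uniform staircase inside $Q_{k_0(m)}$. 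For this $f_m$ the estimates (a)--(b) give $\Vert M_{s,Q_0}^{\#}f_m\Vert_X\simeq\Vert h_{a_m}\Vert_{\bar X}=\phi_X(a_m)$ and $\inf_c\Vert f_m-c\Vert_X\simeq\Vert Qh_{a_m}\Vert_{\bar X}$, so the ratio is comparable to $\Vert Qh_{a_m}\Vert_{\bar X}/\phi_X(a_m)\to\infty$, the desired contradiction.

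The main obstacle is estimate (a), the two-sided control of the local maximal function of the staircase. The choice $\rho<s$ is what makes it work: a cube concentric with the family has its exceptional set contained in the next cube $Q_k$, whose relative measure $\rho<s$ is too small to count, so such cubes contribute oscillation $0$; and a packing/counting argument shows that no cube $Q\ni x$ can capture an $s$-fraction of two non-adjacent levels $a_j,a_{j'}$ with $|j-j'|\ge 2$, since reaching two such annuli forces $|Q|$ so large that the smaller annulus occupies less than an $s$-fraction of $Q$ (here $\rho<s\le 1/2$ is used again). Hence only adjacent jumps survive, giving $M_{s,Q_0}^{\#}f(x)\simeq\delta_k$ on $A_k$ once the jumps satisfy a mild doubling $\delta_{k+1}\le C\delta_k$, which the discretized $h_{a_m}$ does. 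A secondary point, also handled by fundamental type, is the reduction of the unboundedness of $Q$ to the simple witnesses $h_a$; without a structural hypothesis on $X$ the witnesses of $\alpha_X=0$ need not be characteristic functions, and it is precisely Definition \ref{def:fun} that brings the problem back to $\phi_X$.
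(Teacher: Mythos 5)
Your proposal is correct and, once unwound, is essentially the paper's own proof in discrete form: your staircase $f_m$ is a discretization of the paper's logarithmic extremal $f_a(x)=n\ln(a^{1/n}/|x|_\infty)\chi_{a^{1/n}Q_0}(x)$ (note $Qh_a(t)=\ln(a/t)\chi_{(0,a)}(t)=f_a^{\ast}(t)$), your bound $\Vert M_{s,Q_0}^{\#}f_m\Vert_X\preceq\phi_X(a_m)$ is the paper's estimate that $M_{s,Q_0}^{\#}f_a\le D$ on a set of measure $\preceq a$ and vanishes outside, and your use of fundamental type to reduce the unboundedness of $Q$ to the characteristic-function witnesses $h_a$ is exactly the paper's extraction of sequences with $\phi_X(u_ka_k)\ge\frac{1}{2}\phi_X(a_k)$, followed by the same norm chain through \eqref{optimality mediana}. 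The only caveats are cosmetic: the two-sided pointwise claim $M_{s,Q_0}^{\#}f\simeq\delta_k$ on $A_k$ is stronger than needed (and its lower bound is delicate for $s$ near $1/2$ in high dimension), and $\rho$ must be taken small relative to a dimensional constant times $s$, but the contradiction uses only the upper bound and the support estimate, which your packing argument does deliver.
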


\begin{proof}
To the contrary, suppose that $\alpha_{X}=0$. Since $X$ is of fundamental type
we can find two numerical sequences $\{u_{k}\}_{k\in\mathbb{N}},\{a_{k}%
\}_{k\in\mathbb{N}}$ contained in $(0,1)$, converging to zero, and such that%
\begin{equation}
\phi_{X}(u_{k}a_{k})\geq\frac{1}{2}\phi_{X}(a_{k}),\;\;k=1,2,\dots\label{EQ12}%
\end{equation}

{Without loss of generality we can assume that $Q_{0}=[0,1]^{n}$. Moreover, if
$b>0$ we set $bQ_{0}:=[0,b]^{n}$. For $a\in(0,1),$ let $f_{a}(x):=n\ln
(\frac{a^{1/n}}{|x|_{\infty}})\chi_{a^{1/n}Q_{0}}(x)$, $x\ne0$, denoting
$|x|_{\infty}:=\max_{i=1,2,\dots,n}|x_{i}|$ for every $x=(x_{i})_{i=1}^{n}%
\in\mathbb{R}^{n}$. One can readily verify that there exists a constant
$D\geq1,$ that depends only on the dimension and $s$, such that $M_{s,Q_{0}%
}^{\#}f_{a}(x)\leq D$ if $|x|\leq Da$ and $M_{s,Q_{0}}^{\#}f_{a}(x)=0$ if
$|x|>Da$. Thus, using the concavity of the fundamental function $\phi_{X}$
(see Section \ref{secc:ri}), we get
\begin{equation}
\Vert M_{s,Q_{0}}^{\#}f_{a}\Vert_{X}\leq D\phi_{X}(Da)\leq D^{2}\phi
_{X}(a),\;\;0<a\leq1. \label{la M}%
\end{equation}
Moreover, it can be easily checked that $f_{a}^{\ast}(t)=\ln(a/t)\chi
_{(0,a)}(t)$ and $m_{f_{a}}(Q_{0})=0$ if $a$ is sufficiently small. Thus,
using (\ref{la M}), (\ref{mediana}), \eqref{optimality mediana} and
\eqref{EQ12}, for sufficiently large $k\in\mathbb{N}$, we have
\begin{align*}
D^{2}C\phi_{X}(a_{k})  &  \geq C\Vert M_{s,Q_{0}}^{\#}f_{a_{k}}\Vert_{X}\\
&  \geq\inf_{c\in\mathbb{R}}\Vert f_{a_{k}}-c\Vert_{X}\\
&  \geq\frac{1}{4}\Vert f_{a_{k}}-m_{f_{a_{k}}}(Q_{0})\Vert_{X}\\
&  =\frac{1}{4}\Vert f_{a_{k}}\Vert_{X}\\
&  \geq\frac{1}{4}\Vert\ln(a_{k}/t)\chi_{(0,a_{k})}(t)\Vert_{\bar{X}}\\
&  \geq\frac{1}{4}\Vert\ln(a_{k}/t)\chi_{(0,a_{k}u_{k})}(t)\Vert_{\bar{X}}\\
&  \geq\frac{1}{4}\ln(u_{k}^{-1})\Vert\chi_{(0,a_{k}u_{k})}(t)\Vert_{\bar{X}%
}\\
&  =\frac{1}{4}\ln(u_{k}^{-1})\phi_{X}(u_{k}a_{k})\\
&  \geq\frac{1}{8}\ln(u_{k}^{-1})\phi_{X}(a_{k}).
\end{align*}
This leads to a contradiction since $\lim_{k\rightarrow\infty}(\ln(u_{k}%
^{-1}))=\infty$. }
\end{proof}

Applying Theorems \ref{theostro1} --- \ref{T5}, we immediately obtain the
following result.

\begin{corollary}
\label{coro:coro}Let $X$ be an r.i. space of fundamental type. Then the
following conditions are equivalent:

(a) $GaRo_{X}=X$;

(b) $GaRo_{X}\subset X$;

(c) $\alpha_{X}>0$.
\end{corollary}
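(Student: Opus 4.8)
The structure of the argument is a short cycle that stitches together the three theorems just proved. The crucial preliminary observation is that the embedding $X\subset GaRo_{X}$ holds for \emph{every} r.i. space $X$: this is exactly the first half of the proof of Theorem~\ref{Cor2}, where $4|f|\in\Gamma_{f}$ gives $\left\Vert f\right\Vert _{GaRo_{X}}\leq4\left\Vert f\right\Vert _{X}$, hence the set inclusion $X\subseteq GaRo_{X}$. Granting this, (a) and (b) are equivalent for trivial reasons: $GaRo_{X}=X$ means precisely $X\subseteq GaRo_{X}$ together with $GaRo_{X}\subseteq X$, and since the first inclusion is automatic, (a) reduces to (b). Thus the entire content of the Corollary is the equivalence of (b) and (c), and the plan is to close the loop by deriving (c) from (b) and (a) from (c).

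The implication (c)$\Rightarrow$(a) is nothing but Theorem~\ref{Cor2}. For (b)$\Rightarrow$(c), fix $s\in(0,\min\{s_{0},1/2\})$, so that both Theorem~\ref{theostro1} and Theorem~\ref{T5} are available for this $s$. By Theorem~\ref{theostro1} we have $\left\Vert f\right\Vert _{GaRo_{X}}\simeq\left\Vert M_{s,Q_{0}}^{\#}f\right\Vert _{X}$. Now the assumption $GaRo_{X}\subseteq X$ is a priori only a set inclusion, but I would upgrade it to a bounded embedding by the closed graph theorem, working modulo constants: the $GaRo_{X}$ seminorm annihilates constants, so it descends to a norm on $L^{1}(Q_{0})/\mathbb{R}$, while on $X/\mathbb{R}$ the natural norm is $\inf_{c\in\mathbb{R}}\left\Vert f-c\right\Vert _{X}$. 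The set inclusion descends to a linear injection $GaRo_{X}/\mathbb{R}\hookrightarrow X/\mathbb{R}$, and boundedness yields a constant $C$ with
\[
\inf_{c\in\mathbb{R}}\left\Vert f-c\right\Vert _{X}\leq C\left\Vert f\right\Vert _{GaRo_{X}}\leq C'\left\Vert M_{s,Q_{0}}^{\#}f\right\Vert _{X}.
\]
This is exactly hypothesis \eqref{mediana} of Theorem~\ref{T5}, which (since $X$ is of fundamental type) forces $\alpha_{X}>0$, establishing (c).

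The one place that is not purely formal, and which I expect to be the main obstacle, is the justification of the closed graph step. To apply it I must check that both quotient spaces are complete and that the inclusion has closed graph. Completeness of $X/\mathbb{R}$ is standard, and completeness of $GaRo_{X}/\mathbb{R}$ follows from the equivalence $\left\Vert f\right\Vert _{GaRo_{X}}\simeq\left\Vert M_{s,Q_{0}}^{\#}f\right\Vert _{X}$ of Theorem~\ref{theostro1} together with the completeness of $X$. The closed graph condition is verified by observing that both topologies dominate convergence in $L^{1}/\mathbb{R}$: indeed $X\hookrightarrow L^{1}$, and taking the one-cube packing $\{Q_{0}\}$ in the defining inequality \eqref{dulfa} together with \eqref{extradulfa0} gives $\left\Vert f-f_{Q_{0}}\right\Vert _{L^{1}}\leq\left\Vert f\right\Vert _{GaRo_{X}}$, so $GaRo_{X}$-convergence modulo constants also implies $L^{1}$-convergence modulo constants; hence a joint limit must agree modulo constants and the graph is closed. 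Once this bounded embedding is in hand, the remaining steps are the direct invocations of Theorems~\ref{theostro1}, \ref{Cor2} and \ref{T5} described above.
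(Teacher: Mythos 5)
Your proposal is correct and follows the paper's own route: the paper's entire proof is the single line ``Applying Theorems \ref{theostro1}---\ref{T5}, we immediately obtain the following result,'' i.e.\ (c)$\Rightarrow$(a) is Theorem \ref{Cor2}, (a)$\Leftrightarrow$(b) via the universal embedding $X\subset GaRo_{X}$ with $\left\Vert f\right\Vert _{GaRo_{X}}\leq4\left\Vert f\right\Vert _{X}$, and (b)$\Rightarrow$(c) by deriving \eqref{mediana} from Theorem \ref{theostro1} and invoking Theorem \ref{T5}, exactly as you assemble it (including the correct observations that \eqref{mediana} is vacuous when $M_{s,Q_{0}}^{\#}f\notin X$ and that one should take $s<\min\{s_{0},1/2\}$ so both theorems apply).

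The one step you rightly identify as non-formal --- upgrading the set inclusion (b) to the quantitative estimate $\inf_{c}\Vert f-c\Vert_{X}\leq C\Vert f\Vert_{GaRo_{X}}$ --- is left implicit in the paper, and your closed-graph argument on the quotients modulo constants is the natural way to supply it; note that the standard automatic-continuity theorem for inclusions of Banach function spaces (\cite{bs}) does not apply verbatim here, since $GaRo_{X}$ is only a seminormed space annihilating constants, so your quotient formulation is genuinely needed. The only soft spot is your one-line claim that completeness of $GaRo_{X}/\mathbb{R}$ ``follows from'' Theorem \ref{theostro1}: to pass the bound $\Vert M_{s,Q_{0}}^{\#}(f_{n}-f_{m})\Vert_{X}\leq\varepsilon$ to the limit in $m$ one needs a lower-semicontinuity property of $M_{s,Q_{0}}^{\#}$ under a.e.\ (or $L^{1}$) convergence combined with the Fatou property of $X$, possibly at the cost of slightly changing $s$ --- true, but not a triviality. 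A cleaner route is to prove completeness directly from the definition via the absolutely-convergent-series criterion: given $\sum_{n}\Vert f_{n}\Vert_{GaRo_{X}}<\infty$, choose $\gamma_{n}\in\Gamma_{f_{n}}$ with $\Vert\gamma_{n}\Vert_{X}\leq\Vert f_{n}\Vert_{GaRo_{X}}+2^{-n}$, observe that $|\gamma_{n}|\in\Gamma_{f_{n}}$ so one may assume $\gamma_{n}\geq0$, and use the Fatou property of $X$ together with your one-cube-packing bound (which, incidentally, holds in the form $\Vert f-f_{Q_{0}}\Vert_{L^{1}}\preceq\Vert f\Vert_{GaRo_{X}}$, with the constant of the embedding $X\subset L^{1}$, rather than with constant $1$) to check that $\sum_{n>N}\gamma_{n}\in\Gamma_{f-g_{N}}$ for the $L^{1}$-limit $f$ of the partial sums $g_{N}$, modulo constants. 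With that detail tightened, your proof is complete and coincides with the paper's intended argument.
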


\section{$K-$functionals and rearrangement inequalities\label{sec:rea}}

In this section we consider some examples of the interaction of the
Garsia-Rodemich functionals with rearrangements, that are connected with our
development in this paper.

Our first application deals with a new proof of an inequality due to
Bennett-Sharpley (cf. \cite[Theorem $7.3,$ pag. 377]{bs}).

\begin{example}
\label{anterior}There exists an absolute constant $c>0,$ such that for all
$f\in L^{1}(Q_{0}),$ we have%
\begin{equation}
f^{\ast\ast}(t)-f^{\ast}(t)\leq c\left(  f_{Q_{0}}^{\#}\right)  ^{\ast
}(t),\;\;0<t<1/6. \label{rear0}%
\end{equation}

\end{example}

\begin{proof}
We recall the following fact from \cite{G-R spaces}: There exists an absolute
constant $c_{1}$ such that for all $f\in L^{1}(Q_{0}),$ and all $\gamma
\in\Gamma_{f}$, we have%
\begin{equation}
f^{\ast\ast}(t)-f^{\ast}(t)\leq c_{1}\gamma^{\ast\ast}(t),0<t<1/6.
\label{rear}%
\end{equation}
On the other hand, from (\ref{equ103}), we know that for sufficiently small
$s>0${ we have $16M_{s,Q_{0}}^{\#}f\in\Gamma_{f}.$} Consequently, by
(\ref{rear}),
\[
f^{\ast\ast}(t)-f^{\ast}(t)\leq16c_{1}\left(  M_{s,Q_{0}}^{\#}f\right)
^{\ast\ast}(t),\;\;0<t<1/6.
\]
Combining the last inequality with the fact that there exists an absolute
constant $c_{2}$ such that (cf. (\ref{jt3}))%
\[
\left(  M_{s,Q_{0}}^{\#}f\right)  ^{\ast\ast}(t)\leq c_{2}\left(  f_{Q_{0}%
}^{\#}\right)  ^{\ast}(t),
\]
we obtain (\ref{rear0}).
\end{proof}

Our second result shows how the continuity of rearrangements on
Garsia-Rodemich spaces can be easily established using their description
obtained in Theorem \ref{theostro1} and interpolation (compare with the
methods to establish related rearrangement inequalities that were developed in
\cite{garro} and \cite{beckner})).

\begin{theorem}
There exists an absolute constant $c>0$ such that for all $f\in GaRo_{X},$%
\[
\left\Vert f^{\ast}\right\Vert _{GaRo_{\bar{X}}(0,1)}\leq c\left\Vert
f\right\Vert _{GaRo_{X}(Q_{0})}.
\]

\end{theorem}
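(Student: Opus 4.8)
The theorem asserts that the decreasing rearrangement map $f \mapsto f^*$ is bounded from $GaRo_X(Q_0)$ into $GaRo_{\bar X}(0,1)$. Let me think about what tools I have available and what the cleanest route is.

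Key available tools:
1. Theorem \ref{theostro1}: $\|f\|_{GaRo_X} \simeq \|M^\#_{s,Q_0} f\|_X$.
2. The Jawerth-Torchinsky formula (jt1): $K(t,f;L^1,BMO) \simeq \int_0^t (M^\#_{s,Q_0} f)^*(u)\,du$.
3. The equivalences and the Calderón majorization principle.

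The strategy: use the $K$-functional characterization. Since $\|f\|_{GaRo_X} \simeq \|M^\#_{s,Q_0} f\|_X$, and by (jt1) the $K$-functional of $(L^1, BMO)$ controls this via $M^\#$, the $GaRo_X$ norm should equal a norm depending only on $K(t,f; L^1, BMO)$ through a Banach lattice.

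The crucial observation: $K(t, f^*; L^1(0,1), L^\infty(0,1)) = \int_0^t f^*(u)\,du = K(t,f; L^1(Q_0), L^\infty(Q_0))$ since rearrangement preserves the $K$-functional for the $(L^1, L^\infty)$ pair (Peetre-Oklander). So rearrangement is an isometry on $(L^1, L^\infty)$-based interpolation norms.

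The subtlety is that $GaRo_X$ lives between $L^1$ and $BMO$, not $L^1$ and $L^\infty$. But I can relate them: $(f^\#_{Q_0})^* \simeq (M^\#_{s,Q_0}f)^{**}$ and these are controlled by the $(L^1,BMO)$ $K$-functional. The comparison $K(t,f;L^1,BMO) \simeq t(f^\#_{Q_0})^*(t)$ connects everything.

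I need to compare $GaRo_{\bar X}(0,1)$ of $f^*$ with $GaRo_X(Q_0)$ of $f$. Both reduce to $M^\#$ norms. The key is that the local sharp maximal function of $f^*$ on $(0,1)$ is controlled by that of $f$ on $Q_0$.

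Here is my plan for the proof:

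=== PROOF PROPOSAL ===

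\begin{proof}
The plan is to reduce both sides to rearrangement-invariant functionals of the $K$-functional for the pair $(L^{1},BMO)$, and then to exploit the fact that passing to the decreasing rearrangement preserves the $K$-functional of the pair $(L^{1},L^{\infty})$. By Theorem \ref{theostro1}, we fix $s\in(0,s_{0})$ and have $\left\Vert f\right\Vert _{GaRo_{X}(Q_{0})}\simeq\left\Vert M_{s,Q_{0}}^{\#}f\right\Vert _{X}$, and likewise $\left\Vert f^{\ast}\right\Vert _{GaRo_{\bar{X}}(0,1)}\simeq\Vert M_{s,(0,1)}^{\#}f^{\ast}\Vert_{\bar{X}}$, where on the interval $(0,1)$ the local maximal operator is taken over subintervals. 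Thus it suffices to prove
\[
\Vert M_{s,(0,1)}^{\#}f^{\ast}\Vert_{\bar{X}}\preceq\left\Vert M_{s,Q_{0}}^{\#}f\right\Vert _{X},
\]
with an absolute implied constant.

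First I would pass from the local maximal functions to the $K$-functional. Combining the Jawerth-Torchinsky formula \eqref{jt1} with the Bennett-Sharpley formula \eqref{extrabmo1} and the equivalence \eqref{jt3}, on each underlying space we have
\[
\int_{0}^{t}(M_{s,Q_{0}}^{\#}f)^{\ast}(u)\,du\simeq K(t,f;L^{1},BMO)\simeq t\left(f_{Q_{0}}^{\#}\right)^{\ast}(t),
\]
and the analogous chain of equivalences for $f^{\ast}$ on $(0,1)$. By the Calder\'{o}n majorization principle recorded in \eqref{hardy}, to control the $\bar X$-norm of $M_{s,(0,1)}^{\#}f^{\ast}$ by the $X$-norm of $M_{s,Q_{0}}^{\#}f$ it is enough to establish the pointwise majorization of Hardy integrals
\begin{equation}
\int_{0}^{t}(M_{s,(0,1)}^{\#}f^{\ast})^{\ast}(u)\,du\preceq\int_{0}^{t}(M_{s,Q_{0}}^{\#}f)^{\ast}(u)\,du,\quad 0<t\leq1,\label{KeyMaj}
\end{equation}
since the left and right sides are, up to constants, $K(t,f^{\ast};L^{1},BMO)$ on $(0,1)$ and $K(t,f;L^{1},BMO)$ on $Q_{0}$, respectively.

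The heart of the matter is therefore the $K$-functional inequality $K(t,f^{\ast};L^{1},BMO)\preceq K(t,f;L^{1},BMO)$. Here I would use that the decreasing rearrangement is an exact contraction for the pair $(L^{1},L^{\infty})$: by the Peetre-Oklander formula \eqref{okl} one has $K(t,f^{\ast};L^{1},L^{\infty})=\int_{0}^{t}(f^{\ast})^{\ast}(u)\,du=\int_{0}^{t}f^{\ast}(u)\,du=K(t,f;L^{1},L^{\infty})$. The remaining task is to transfer this identity from the $(L^{1},L^{\infty})$ level to the $(L^{1},BMO)$ level. Via \eqref{extrabmo1} this amounts to the rearrangement inequality
\[
\left((f^{\ast})_{(0,1)}^{\#}\right)^{\ast}(t)\preceq\left(f_{Q_{0}}^{\#}\right)^{\ast}(t),
\]
for the sharp maximal function, which in turn, by \eqref{jt3}, reduces to comparing the one-dimensional and $n$-dimensional local maximal functions. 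I would establish this by the geometric argument that any subinterval $I\subset(0,1)$ on which $f^{\ast}$ has large oscillation at level $s$ arises from the oscillation of $f$ itself over the corresponding superlevel region, using that $f$ and $f^{\ast}$ are equimeasurable; this produces a cube (or family of cubes) in $Q_{0}$ witnessing comparable oscillation of $f$, possibly after adjusting the measure parameter $s$ by a dimensional factor as in Lemma \ref{L0}.

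The main obstacle I anticipate is precisely this last geometric transfer: on $(0,1)$ the "cubes" are intervals and the monotonicity of $f^{\ast}$ makes its oscillation essentially one-dimensional, whereas on $Q_{0}$ the oscillation of $f$ is genuinely $n$-dimensional, so the passage from oscillation of $f^{\ast}$ over an interval back to oscillation of $f$ over a subcube is not a literal equimeasurability statement and must be handled through the level-set description of $M_{s}^{\#}$ in Lemma \ref{L0}(i). Once \eqref{KeyMaj} is secured, the theorem follows by applying \eqref{hardy} with $\bar X$ and invoking Theorem \ref{theostro1} on both ends, the final implied constant being absolute because all the intermediate equivalences in \eqref{jt1}, \eqref{extrabmo1}, \eqref{jt3} and \eqref{okl} carry absolute (dimensional) constants.
\end{proof}
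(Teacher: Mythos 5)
Your outer scaffolding coincides with the paper's: both proofs reduce, via Theorem \ref{theostro1} applied on $Q_{0}$ and on the interval $(0,1)$, to the comparison $\Vert M_{s,(0,1)}^{\#}f^{\ast}\Vert_{\bar{X}}\preceq\Vert M_{s,Q_{0}}^{\#}f\Vert_{X}$, and both route this through the $K$-functional inequality $K(t,f^{\ast};L^{1}(0,1),BMO(0,1))\preceq K(t,f;L^{1}(Q_{0}),BMO(Q_{0}))$ combined with \eqref{jt1} and the Hardy--Littlewood--Polya--Calder\'{o}n principle \eqref{hardy}. However, at the one step where all the real content lies, your proposal has a genuine gap: you do not prove the $K$-functional inequality. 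You propose to obtain it by a direct ``geometric transfer'' showing that an interval on which $f^{\ast}$ has large oscillation at level $s$ produces a cube in $Q_{0}$ on which $f$ has comparable oscillation --- and you yourself flag that this is not a literal equimeasurability statement. Indeed it is not: the statement you are sketching is essentially equivalent to the rearrangement inequality $\left((f^{\ast})_{(0,1)}^{\#}\right)^{\ast}(t)\preceq\left(f_{Q_{0}}^{\#}\right)^{\ast}(t)$, i.e., to the theorem of Garsia--Rodemich and Bennett--DeVore--Sharpley that $f\mapsto f^{\ast}$ is bounded on $BMO$, which is a nontrivial result with its own substantial proof (cf. \cite{garro}, \cite{bds}, \cite{cwikel2}). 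Left as a sketch, your argument is circular in difficulty: the ``main obstacle'' you anticipate \emph{is} the theorem.

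The paper avoids this entirely by quoting two classical facts and testing the $K$-functional on decompositions: for any $g\in BMO(Q_{0})$ one writes $f^{\ast}=(f^{\ast}-g^{\ast})+g^{\ast}$ and uses (i) the $L^{1}$-contractivity of rearrangement, $\left\Vert f^{\ast}-g^{\ast}\right\Vert _{L^{1}(0,1)}\leq\left\Vert f-g\right\Vert _{L^{1}(Q_{0})}$, and (ii) the cited bound $\left\Vert g^{\ast}\right\Vert _{BMO(0,1)}\leq c_{1}\left\Vert g\right\Vert _{BMO(Q_{0})}$; taking the infimum over $g$ and using that $K(t)/t$ decreases gives $K(t,f^{\ast};L^{1},BMO)\leq c_{1}K(t,f;L^{1},BMO)$ in three lines. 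Note also that the pointwise sharp-function comparison you wanted to prove directly is derived in the paper \emph{from} the $K$-functional inequality (via \eqref{jt1} and \eqref{jt3}), not the other way around. To repair your proposal, replace the geometric transfer by the decomposition argument above, citing the $BMO$ rearrangement theorem; your reliance on \eqref{okl} for the pair $(L^{1},L^{\infty})$ is then superfluous, since that pair plays no role in the paper's proof.
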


\begin{proof}
From \cite{garro}, \cite{bds} (cf. also \cite{cwikel2}), we know that there
exists an absolute constant $c_{1}\geq1,$ such that%
\[
\left\Vert f^{\ast}\right\Vert _{BMO(0,1)}\leq c_{1}\left\Vert f\right\Vert
_{BMO}.
\]
On the other hand, it is well known that (cf. \cite{garro},{
\cite[Theorem~2.3.1]{KPS}}) for all $f,g\in L^{1}(Q_{0}),$%
\[
\left\Vert f^{\ast}-g^{\ast}\right\Vert _{L^{1}(0,1)}\leq\left\Vert
f-g\right\Vert _{L^{1}(Q_{0})}.
\]
Consequently,{ for every $f\in L^{1}(Q_{0}),$}%
\begin{align*}
K(t,f^{\ast};L^{1}(0,1),BMO(0,1))  &  =\inf\{\left\Vert f_{1}\right\Vert
_{L^{1}(0,1)}+t\left\Vert f_{2}\right\Vert _{BMO(0,1)}:f^{\ast}=f_{1}%
+f_{2}\}\\
&  \leq\inf\{\left\Vert f^{\ast}-g^{\ast}\right\Vert _{L^{1}(0,1)}+t\left\Vert
g^{\ast}\right\Vert _{BMO(0,1)}:g\in BMO(Q_{0})\}\\
&  \leq\inf\{\left\Vert f-g\right\Vert _{L^{1}(Q_{0})}+tc_{1}\left\Vert
g\right\Vert _{BMO(Q_{0})}:g\in BMO(Q_{0})\}\\
&  \leq K(c_{1}t,f;L^{1}(Q_{0}),BMO(Q_{0}))\\
&  \leq c_{1}K(t,f;L^{1}(Q_{0}),BMO(Q_{0}))\text{ (since }K(t)/t\text{
decreases).}%
\end{align*}

In particular, in view of (\ref{jt1}), there exists an absolute constant
$c_{2}>0,$ such that%
\begin{equation}
\left(  M_{s,(0,1)}^{\#}f^{\ast}\right)  ^{\ast\ast}(t)\leq c_{2}\left(
M_{s,Q_{0}}^{\#}f\right)  ^{\ast\ast}(t). \label{ineq locmaxoper}%
\end{equation}
By the Hardy-Littlewood-P\'{o}lya-Calder\'{o}n principle, it follows that%
\begin{align*}
\left\Vert M_{s,(0,1)}^{\#}f^{\ast}\right\Vert _{\bar{X}(0,1)}  &  \leq
c^{\prime}\left\Vert (M_{s,Q_{0}}^{\#}f)^{\ast}\right\Vert _{\bar{X}(0,1)}\\
&  =c^{\prime}\left\Vert M_{s,Q_{0}}^{\#}f\right\Vert _{X(Q_{0})}.
\end{align*}
Applying (\ref{stro5}) we finally obtain
\[
\left\Vert f^{\ast}\right\Vert _{GaRo_{\bar{X}(0,1)}}\leq c\left\Vert
f\right\Vert _{GaRo_{X(Q_{0})}},
\]
as we wished to show.
\end{proof}

\begin{remark}
Essentially the same argument shows that if $T$ is a bounded operator on the
pair $(L^{1},BMO)$, then $T$ is a bounded operator in the space $GaRo_{X}.$
\end{remark}

\begin{proof}
Indeed, for such operators we have%
\[
K(t,Tf;L^{1},BMO)\leq cK(t,f;L^{1},BMO),\;\;t>0,
\]
which, in view of (\ref{jt1}), implies%
\[
\int_{0}^{t}\left(  M_{s,Q_{0}}^{\#}Tf\right)  ^{\ast}(s)ds\leq c\int_{0}%
^{t}\left(  M_{s,Q_{0}}^{\#}f\right)  ^{\ast}(s)ds.
\]
Therefore, we get (\ref{hardy}) and, as above, for any r.i. space $X$ we have%
\[
\left\Vert M_{s,Q_{0}}^{\#}Tf\right\Vert _{X}\leq c\left\Vert M_{s,Q_{0}}%
^{\#}f\right\Vert _{X}.
\]
The desired result now follows from Theorem \ref{theostro1}.
\end{proof}

\begin{remark}
As we have seen before (cf. (\ref{jt3})),$\left(  M_{s,(0,1)}^{\#}f^{\ast
}\right)  ^{\ast\ast}(t)\simeq\left(  \left(  f^{\ast}\right)  _{(0,1)}%
^{\#}\right)  ^{\ast}(t),$ and $\left(  M_{s,Q_{0}}^{\#}f\right)  ^{\ast\ast
}(t)\simeq\left(  f_{Q_{0}}^{\#}\right)  ^{\ast}(t),$ thus for a suitable
constant $C>0,${ from \eqref{ineq locmaxoper} it follows that}%
\[
\left(  \left(  f^{\ast}\right)  _{(0,1)}^{\#}\right)  ^{\ast}(t)\leq C\left(
f_{Q_{0}}^{\#}\right)  ^{\ast}(t),
\]
which should be compared with Theorem \ref{anterior}.
\end{remark}

\begin{remark}
The $K-$functional for the pair $(L^{\infty},BMO)$ was computed by several
authors including Janson, Jawerth-Torchinsky, Shvartsman (cf. \cite{JT},
\cite{shv} and the references therein). It would be of interest to connect the
interpolation spaces{ with respect to the pair }$(L^{\infty},BMO)$ and the
Garsia-Rodemich constructions.
\end{remark}

\section{Fefferman-Stein inequality via Garsia-Rodemich
spaces\label{secc:feff}}

The original Fefferman-Stein inequality (cf. \cite{fs} and also \cite{str} and
the references therein) concerns with the embedding (cf. (\ref{feffe}) and
(\ref{feffe1}) above)
\[
L^{p\#}\subset L^{p},1<p<\infty.
\]
{In \cite{str}, Str\"{o}mberg extended this result to an appropriate class of
Orlicz spaces.}

The connection between $X^{\#}$ and $GaRo_{X}$ can be seen from the fact that%
\begin{equation}
X^{\#}\subset GaRo_{X}. \label{stro2}%
\end{equation}
Indeed, we can easily show that {from $f\in X^{\#}$ it follows $2f_{Q_{0}%
}^{\#}\in\Gamma_{f}$.} {This follows directly from (\ref{extradulfa0}) since
for each $Q\subset Q_{0}$ we have%
\begin{align*}
\frac{1}{|Q|}\int_{Q}\int_{Q}|f(x)-f(y)|dxdy  &  \leq2\int_{Q}|f(x)-f_{Q}|dx\\
&  =2\frac{|Q|}{|Q|}\int_{Q}|f(x)-f_{Q}|dx\\
&  =2\int_{Q}\left(  \frac{1}{|Q|}\int_{Q}|f(x)-f_{Q}|dx\right)  dy\\
&  \leq2\int_{Q}f_{Q_{0}}^{\#}(y)dy,
\end{align*}
and so $\gamma:=2f_{Q_{0}}^{\#}$ satisfies inequality \eqref{dulfa}.
Consequently, (\ref{stro2}) holds for all r.i. spaces $X$, and, moreover, we
have%
\[
\left\Vert f\right\Vert _{GaRo_{X}}\leq2\left\Vert f\right\Vert _{X^{\#}}.
\]
}

Using the above observation, one can extend the Fefferman-Stein-Str\"{o}mberg
result\footnote{However, note that unlike \cite{str} we consider functions
defined on a fixed cube $Q_{0}.$} to the setting of r.i. spaces.

\begin{theorem}
\label{theostro} If the lower Boyd index $\alpha_{X}$ of the r.i. space $X$ is
positive, then $X^{\#}\subset X$.
\end{theorem}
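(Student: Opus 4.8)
The plan is to combine the chain of embeddings already established in the excerpt with the characterization of $GaRo_X$ from Theorem \ref{theostro1}. Specifically, I would read the desired conclusion $X^{\#}\subset X$ as the composition of two facts: first, $X^{\#}\subset GaRo_X$, which is inequality \eqref{stro2} and holds for \emph{every} r.i. space $X$ (with $\Vert f\Vert_{GaRo_X}\leq 2\Vert f\Vert_{X^{\#}}$); and second, $GaRo_X=X$ under the hypothesis $\alpha_X>0$, which is precisely Theorem \ref{Cor2}. Chaining these gives, for $f\in X^{\#}$,
\[
\Vert f\Vert_X \leq c\,\Vert f\Vert_{GaRo_X}\leq 2c\,\Vert f\Vert_{X^{\#}},
\]
where the first inequality is the nontrivial containment $GaRo_X\subset X$ supplied by Theorem \ref{Cor2} and the second is \eqref{stro2}. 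This yields the embedding $X^{\#}\subset X$ directly.

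Thus my first step would be to invoke \eqref{stro2} to put any $f\in X^{\#}$ into $GaRo_X$ with control of norms, and my second step would be to apply Theorem \ref{Cor2}, whose hypothesis $\alpha_X>0$ is exactly the assumption of the present theorem. No new analytic work is required: the entire content has been front-loaded into Theorem \ref{Cor2}, whose proof already carries the burden of reducing to the $K$-monotone pair $(L^1,L^p)$ with $p>1/\alpha_X$ and representing the $X$-norm through the $K$-functional. In effect the present statement is a corollary, and the ``proof'' is the observation that the two inclusions compose.

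The only place where one must be slightly careful is that the definition of $X^{\#}$ in \eqref{feffe} quotients by constants implicitly through the sharp maximal function $f_{Q_0}^{\#}$, which annihilates additive constants, whereas the conclusion $f\in X$ is a statement about $f$ itself. This is the one subtlety I would flag: strictly speaking $f_{Q_0}^{\#}\in X$ controls $f$ only modulo constants, so the clean embedding $X^{\#}\subset X$ should be read with the understanding (as in the Fefferman-Stein setting) that $f$ is already assumed integrable on the probability space $Q_0$, so any constant is automatically in $X$ and the passage from ``$f$ modulo constants in $X$'' to ``$f\in X$'' is free. Since $GaRo_X\subset L^1$ by construction and constants lie in $L^\infty\subset X$, no genuine obstruction arises here.

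I do not expect any real obstacle, because the hard analysis—interpolation against $(L^1,BMO)$ via the Jawerth-Torchinsky and Bennett-Sharpley $K$-functional formulas, $K$-monotonicity of $(L^1,L^p)$, and the representation \eqref{nece}—has already been discharged inside Theorem \ref{Cor2}. If anything, the ``main step'' is recognizing that Theorem \ref{theostro} is not a standalone result but a repackaging of the containment $GaRo_X\subset X$ combined with the universal inclusion \eqref{stro2}; once that is seen, the proof is a two-line citation.
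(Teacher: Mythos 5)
Your proposal is correct and is precisely the paper's own proof: the paper also combines the universal embedding $X^{\#}\subset GaRo_{X}$ from \eqref{stro2} with the identification $GaRo_{X}=X$ of Theorem \ref{Cor2} under the hypothesis $\alpha_{X}>0$. Your additional remark about constants is a reasonable clarification but not needed beyond how the paper already treats $X^{\#}$.
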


\begin{proof}
From the condition $\alpha_{X}>0$ and Theorem \ref{Cor2} we infer that
$GaRo_{X}=X$. We conclude by combining this fact with \eqref{stro2}.
\end{proof}

The next result establishes necessary and sufficient conditions, under which
the opposite embedding $X\subset X^{\#}$ holds.

\begin{theorem}
\label{cor1}Let $X$ be an r.i. space on $[0,1]$. The following conditions are equivalent:

(i) $\beta_{X}<1$;

(ii) $GaRo_{X}\subset X^{\#}$;

(iii) $X\subset X^{\#}$.
\end{theorem}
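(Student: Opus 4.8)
The plan is to prove the cyclic chain of implications $(i)\Rightarrow(iii)\Rightarrow(ii)\Rightarrow(i)$, exploiting the two $K$-functional formulas \eqref{jt1} and \eqref{extrabmo1} together with the boundedness characterization \eqref{alcance} of the Hardy operator $P$.

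For $(i)\Rightarrow(iii)$, suppose $\beta_X<1$. Then by \eqref{alcance} the operator $P$ is bounded on $\bar X$. Take any $f\in X$. Combining the Bennett-Sharpley formula \eqref{extrabmo1} with the Jawerth-Torchinsky formula \eqref{jt1} gives precisely the equivalence \eqref{jt3}, namely
\[
\left(f_{Q_0}^{\#}\right)^{\ast}(t)\simeq\frac{1}{t}\int_0^t (M_{s,Q_0}^{\#}f)^{\ast}(u)\,du=P\big((M_{s,Q_0}^{\#}f)^{\ast}\big)(t).
\]
Since $P$ is bounded on $\bar X$, this yields $\big\|f_{Q_0}^{\#}\big\|_X\simeq\big\|(f_{Q_0}^{\#})^{\ast}\big\|_{\bar X}\preceq\big\|(M_{s,Q_0}^{\#}f)^{\ast}\big\|_{\bar X}=\big\|M_{s,Q_0}^{\#}f\big\|_X$. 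But $M_{s,Q_0}^{\#}f(y)\le M_{s,Q_0}^{\#}f(y)$ is dominated pointwise, and crucially $M_{s,Q_0}^{\#}f\le c\,|f|$ holds in the relevant sense through the trivial bound built from $f_{Q_0}^{\#}\le M_{Q_0}\gamma$; more directly, the definition of $M_{s,Q_0}^{\#}$ and Chebyshev give $\|M_{s,Q_0}^{\#}f\|_X\preceq\|f\|_X$ for every r.i.\ space. Hence $\|f\|_{X^{\#}}=\|f_{Q_0}^{\#}\|_X\preceq\|f\|_X<\infty$, so $f\in X^{\#}$ and $X\subset X^{\#}$.

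The implication $(iii)\Rightarrow(ii)$ is the routine direction: it follows immediately by combining the embedding $X\subset X^{\#}$ with the already-established inclusion $GaRo_X\subset X$ whenever available—but since we cannot assume $\alpha_X>0$ here, instead I would route through the inclusion $X^{\#}\subset GaRo_X$ of \eqref{stro2} in the reverse role. Concretely, $GaRo_X\subset X^{\#}$ should be deduced from the description in Theorem~\ref{theostro1}: for $f\in GaRo_X$ we have $\|M_{s,Q_0}^{\#}f\|_X\simeq\|f\|_{GaRo_X}<\infty$, and then the majorization $(f_{Q_0}^{\#})^{\ast}\simeq P\big((M_{s,Q_0}^{\#}f)^{\ast}\big)$ together with boundedness of $P$ again gives $\|f\|_{X^{\#}}\preceq\|M_{s,Q_0}^{\#}f\|_X<\infty$. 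So in fact $(i)$ directly yields $(ii)$ by the same computation, and $(iii)$ serves as the intermediate consumer of the same estimate.

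The real content, and the expected main obstacle, is the converse direction establishing that $(ii)$ (or $(iii)$) forces $\beta_X<1$. The natural strategy mirrors the proof of Theorem~\ref{T5}: assume $\beta_X=1$ and build an explicit counterexample $f$ for which $M_{s,Q_0}^{\#}f$ (equivalently $f$ itself in a Lorentz-type sense) lies in $X$ while $f_{Q_0}^{\#}$ does not, violating $X\subset X^{\#}$ or $GaRo_X\subset X^{\#}$. The difficulty is that $\beta_X=1$ means only that $P$ is \emph{unbounded} on $\bar X$, so I must produce a function $g\ge0$ with $\|g\|_{\bar X}<\infty$ but $\|Pg\|_{\bar X}=\infty$, and then realize $g\simeq(M_{s,Q_0}^{\#}f)^{\ast}$ for a genuine $f\in L^1(Q_0)$ whose sharp maximal function rearranges to $\simeq Pg$; controlling the geometry of the cubes so that $f_{Q_0}^{\#}$ actually attains the Hardy-averaged growth, rather than something smaller, is the delicate step. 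I expect to use a logarithmic or lacunary construction analogous to $f_a$ in Theorem~\ref{T5}, superimposed on a sequence of dyadic scales dictated by the unboundedness of $P$, and to invoke \eqref{jt3} to pass between $f_{Q_0}^{\#}$ and the local maximal function cleanly.
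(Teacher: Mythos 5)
Your forward implications are repairable, but the proposal has a genuine gap exactly where you yourself flag ``the real content'': neither $(ii)\Rightarrow(i)$ nor $(iii)\Rightarrow(i)$ is actually proved. You only sketch a counterexample strategy (produce $g\geq0$ with $\Vert g\Vert_{\bar X}<\infty$ but $\Vert Pg\Vert_{\bar X}=\infty$, then realize $g\simeq(M_{s,Q_{0}}^{\#}f)^{\ast}$ for a genuine $f$ whose sharp function attains the Hardy-averaged growth), and you explicitly defer the delicate step instead of carrying it out. That construction is far from routine: note that the analogous converse in Theorem \ref{T5} needs the extra hypothesis that $X$ be of fundamental type precisely in order to extract extremal functions from the index condition, whereas Theorem \ref{cor1} is asserted for arbitrary r.i.\ spaces; there is also the unaddressed issue of upgrading the set inclusion $X\subset X^{\#}$ to a norm inequality before a single function can contradict it. The paper avoids all of this with a short direct argument you missed: by the Bennett--Sharpley rearrangement inequality $f^{\ast\ast}(t)-f^{\ast}(t)\leq c^{\prime}(f_{Q_{0}}^{\#})^{\ast}(t)$ for $0<t<1/6$ (Theorem \ref{anterior}, \cite[Theorem~5.7.3]{bs}), assumption (iii) gives $\Vert f^{\ast\ast}\Vert_{X}\preceq\Vert f\Vert_{X}+\Vert f_{Q_{0}}^{\#}\Vert_{X}\preceq\Vert f\Vert_{X}$, where the restriction $t<1/6$ is handled by the dilation $\sigma_{6}$ and \eqref{verabajo}; hence the Hardy operator $P$ is bounded on $\bar X$ and $\beta_{X}<1$ by \eqref{alcance}. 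This two-line argument is the missing idea, and without it (or a completed construction) the theorem is not proved.

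Two smaller points on the directions you do argue. First, your route to $(ii)$ and $(iii)$ from $(i)$ via \eqref{jt3} and boundedness of $P$ is workable but heavier than necessary: the paper simply uses $f_{Q_{0}}^{\#}\leq M_{Q_{0}}\gamma$ for every $\gamma\in\Gamma_{f}$ (see \eqref{EQ20}) together with the boundedness of $M_{Q_{0}}$ on $X$ when $\beta_{X}<1$, and then obtains $(iii)$ from $(ii)$ through the trivial embedding $X\subset GaRo_{X}$ (since $4|f|\in\Gamma_{f}$); observe that your intended link $(iii)\Rightarrow(ii)$ never receives a proof --- as you half-concede --- and it is unclear how to get it without routing back through $(i)$, which is why the paper orders the cycle as $(i)\Rightarrow(ii)\Rightarrow(iii)\Rightarrow(i)$. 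Second, your justification of $\Vert M_{s,Q_{0}}^{\#}f\Vert_{X}\preceq\Vert f\Vert_{X}$ is garbled: the pointwise bound $M_{s,Q_{0}}^{\#}f\leq c|f|$ is false (take $f=\chi_{E}$ and $x\notin E$ close to $E$), and ``Chebyshev'' alone does not suffice; the correct statement is the rearrangement estimate $(M_{s,Q_{0}}^{\#}f)^{\ast}(t)\leq f^{\ast}(4^{-n}st)$, which follows from Lemma \ref{L0}(i) applied with $c=0$, and which yields the norm bound via \eqref{verabajo}. These repairs salvage your forward directions, but the converse must be replaced by the Bennett--Sharpley argument above.
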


\begin{proof}
$(i)\rightarrow(ii)$. Let $f\in GaRo_{X}.$ {As we have seen above for every
$\gamma\in\Gamma_{f}$, we have $f_{Q_{0}}^{\#}(x)\leq M_{Q_{0}}\gamma(x)$.}
Since we are assuming that $\beta_{X}<1$, the Hardy-Littlewood operator
$M_{Q_{0}}$ is bounded on $X$. Hence,
\[
\left\Vert f_{Q_{0}}^{\#}\right\Vert _{X}\leq\left\Vert M_{Q_{0}}%
\gamma\right\Vert _{X}\leq\Vert M_{Q_{0}}\Vert_{X\rightarrow X}\left\Vert
\gamma\right\Vert _{X}.
\]
Taking infimum over all $\gamma\in\Gamma_{f}$, we get
\[
\left\Vert f_{Q_{0}}^{\#}\right\Vert _{X}\leq\Vert M_{Q_{0}}\Vert
_{X\rightarrow X}\left\Vert f\right\Vert _{GaRo_{X}},
\]
whence $f\in X^{\#}.$

$(ii)\rightarrow(iii)$ The implication is trivial since the embedding
$X\subset GaRo_{X}$ holds for all r.i. spaces $X$ (see the beginning of the
proof of Theorem \ref{Cor2}).

$(iii)\rightarrow(i)$. By \cite[Theorem~5.7.3]{bs} (cf. also Example
\ref{anterior} in Section \ref{sec:rea}), we have
\[
f^{\ast\ast}(t)-f^{\ast}(t)\leq c^{\prime}(f_{Q_{0}}^{\#})^{\ast
}(t),\;\;0<t<1/6,
\]
{for some absolute constant $c^{\prime}$. Therefore,
\[
f^{\ast\ast}(t/6)\leq f^{\ast}(t/6)+c^{\prime}(f_{Q_{0}}^{\#})^{\ast
}(t/6),\;\;0<t<1.
\]
From the latter inequality, (\ref{verabajo}), and our current assumption, it
follows that
\begin{align*}
\Vert f^{\ast\ast}\Vert_{X}  &  \leq\Vert\sigma_{6}f^{\ast\ast}\Vert_{X}\\
&  \leq\Vert\sigma_{6}f\Vert_{X}+c^{\prime}\Vert\sigma_{6}f_{Q_{0}}^{\#}%
\Vert_{X}\\
&  \leq6c^{\prime}(\Vert f\Vert_{X}+\Vert f_{Q_{0}}^{\#}\Vert_{X})\\
&  \leq c\Vert f\Vert_{X}.
\end{align*}
This shows that the Hardy operator $P$ is bounded on $X$, and therefore, by
(\ref{alcance}), $\beta_{X}<1$.}
\end{proof}

\section{A packing formula for the $K-$functional of $(L^{1},BMO)$%
\label{sec:K}}

The new characterization of the Garsia-Rodemich spaces discussed in the
introduction (cf. (\ref{mor1}) above) suggested a new formula for the
$K-$functional of the pair $(L^{1},BMO)$ {(see Section
\ref{subsec:interpolation}).}

\begin{remark}
\label{rem: mean zero} In order to properly interpret the pair $(L^{1},BMO)$
as a compatible pair of Banach spaces, it is necessary to factor out the
constant functions. Equivalently, we can restrict ourselves to consider
functions with zero mean, i.e. $\int_{Q_{0}}f(x)dx=0.$
\end{remark}

For any family of cubes $\pi=\{Q_{i}\}\in P:=P(Q_{0}),$ we define
\[
S_{\pi,\sharp}(f)(x)=\sum_{Q_{i}\in\pi}\left(  \frac{1}{\left\vert
Q_{i}\right\vert }\int_{Q_{i}}|f(y)-f_{Q_{i}}|dy\right)  \chi_{Q_{i}%
}(x),\;\;x\in Q_{0},
\]
and let
\[
F_{f,\sharp}(t)=sup_{\pi\in P}(S_{\pi,\sharp}(f))^{\ast}(t),\;\;0<t\leq1.
\]

\begin{theorem}
\label{theokfunct}There exist absolute constants, such that for all $f\in
L^{1}$ we have
\[
K(t,f;L^{1},BMO)\simeq tF_{f,\sharp}(t),\;\;0<t\leq1.
\]

\end{theorem}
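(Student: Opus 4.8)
The plan is to reduce the claimed equivalence to the Bennett--Sharpley formula \eqref{extrabmo1}, $K(t,f;L^{1},BMO)\simeq t(f_{Q_{0}}^{\#})^{\ast}(t)$, by comparing $F_{f,\sharp}$ with the rearrangement of the sharp maximal function. Both $S_{\pi,\sharp}(f)$ and $f_{Q_{0}}^{\#}$ are unchanged when a constant is added to $f$ (they involve only the deviations $f-f_{Q_{i}}$), so in view of Remark \ref{rem: mean zero} I would work with functions of mean zero, for which $f_{Q_{0}}=0$ and $\int_{Q_{0}}|f|\,dx=\Vert f\Vert_{L^{1}}$. Everything then comes down to showing that $tF_{f,\sharp}(t)$ and $t(f_{Q_{0}}^{\#})^{\ast}(t)$ are comparable for $0<t\le 1$.

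First I would establish the easy upper bound. Fix a packing $\pi=\{Q_{i}\}\in P$. For a.e.\ $x$ the point $x$ lies in at most one $Q_{i}$, and there
\[
S_{\pi,\sharp}(f)(x)=\frac{1}{|Q_{i}|}\int_{Q_{i}}|f-f_{Q_{i}}|\,dy\leq f_{Q_{0}}^{\#}(x),
\]
since $Q_{i}$ is one of the cubes competing in the supremum defining $f_{Q_{0}}^{\#}(x)$; off $\bigcup_{i}Q_{i}$ the left-hand side vanishes. Hence $S_{\pi,\sharp}(f)\le f_{Q_{0}}^{\#}$ pointwise, so $(S_{\pi,\sharp}(f))^{\ast}\le(f_{Q_{0}}^{\#})^{\ast}$, and taking the supremum over $\pi$ gives $F_{f,\sharp}(t)\le(f_{Q_{0}}^{\#})^{\ast}(t)$. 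Together with \eqref{extrabmo1} this yields $tF_{f,\sharp}(t)\preceq K(t,f;L^{1},BMO)$.

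The substantive direction is the lower bound, and the main obstacle is that the natural construction loses a dimensional factor in the measure variable, which must then be reabsorbed. Let $c_{n}$ be the dilation constant of the Vitali covering lemma and put $N=c_{n}^{\,n}$. Fix $t$ with $Nt\le 1$. For $\lambda<(f_{Q_{0}}^{\#})^{\ast}(Nt)$ the set $E_{\lambda}=\{f_{Q_{0}}^{\#}>\lambda\}$ has measure $>Nt$, and every $x\in E_{\lambda}$ lies in a cube $Q_{x}\subset Q_{0}$ with $\frac{1}{|Q_{x}|}\int_{Q_{x}}|f-f_{Q_{x}}|>\lambda$. Applying Vitali I would extract a disjoint subfamily $\{Q_{i}\}$ with $E_{\lambda}\subset\bigcup_{i}c_{n}Q_{i}$ (concentric dilates), whence $\sum_{i}|Q_{i}|\ge c_{n}^{-n}|E_{\lambda}|>t$. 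Since each selected $Q_{i}$ has oscillation exceeding $\lambda$, the packing $\pi=\{Q_{i}\}\in P$ satisfies $S_{\pi,\sharp}(f)>\lambda$ on $\bigcup_{i}Q_{i}$, a set of measure $>t$, so $(S_{\pi,\sharp}(f))^{\ast}(t)>\lambda$. Letting $\lambda\uparrow(f_{Q_{0}}^{\#})^{\ast}(Nt)$ gives $F_{f,\sharp}(t)\ge(f_{Q_{0}}^{\#})^{\ast}(Nt)$ whenever $Nt\le 1$. To absorb the dilation I would invoke the monotonicity of the $K$-functional: using \eqref{extrabmo1} at the argument $Nt$ and the fact that $K(\cdot,f;L^{1},BMO)$ is nondecreasing,
\[
tF_{f,\sharp}(t)\ge t(f_{Q_{0}}^{\#})^{\ast}(Nt)=N^{-1}\,Nt\,(f_{Q_{0}}^{\#})^{\ast}(Nt)\simeq N^{-1}K(Nt,f;L^{1},BMO)\ge N^{-1}K(t,f;L^{1},BMO).
\]

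It remains to treat the endpoint range $1/N<t\le 1$, where $(f_{Q_{0}}^{\#})^{\ast}(Nt)$ vanishes and the above is vacuous; here I would use the trivial one-cube packing $\pi=\{Q_{0}\}$. For it $S_{\pi,\sharp}(f)\equiv\int_{Q_{0}}|f-f_{Q_{0}}|=\Vert f\Vert_{L^{1}}$ is constant, so $F_{f,\sharp}(t)\ge\Vert f\Vert_{L^{1}}$ for all $t\in(0,1]$, giving $tF_{f,\sharp}(t)\ge N^{-1}\Vert f\Vert_{L^{1}}$; on the other hand $K(t,f;L^{1},BMO)\le K(1,f;L^{1},BMO)\le\Vert f\Vert_{L^{1}}$ via the decomposition $f=f+0$. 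Combining the two ranges yields $K(t,f;L^{1},BMO)\preceq tF_{f,\sharp}(t)$ for every $0<t\le 1$, which together with the upper bound proves the asserted equivalence with absolute constants.
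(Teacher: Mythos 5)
Your proposal is correct and follows essentially the same route as the paper: the pointwise bound $S_{\pi,\sharp}(f)\le f_{Q_{0}}^{\#}$ combined with the Bennett--Sharpley formula \eqref{extrabmo1} for the upper estimate, a Vitali selection with constant $5^{n}$ (your $N=c_{n}^{\,n}$) together with the monotonicity of $K(\cdot,f;L^{1},BMO)$ to absorb the dilation, and the one-cube packing $\{Q_{0}\}$ with $K(1)\le\Vert f\Vert_{L^{1}}$ for the range of $t$ near $1$. If anything, your approximation $\lambda<(f_{Q_{0}}^{\#})^{\ast}(Nt)$ with $|E_{\lambda}|>Nt$ is slightly more careful than the paper's assertion $|\Omega(t)|\geq t$, which as literally stated can fail when the distribution function of $f_{Q_{0}}^{\#}$ has jumps (in general $|\{g>g^{\ast}(t)\}|\leq t$), so your write-up quietly repairs a small imprecision in the published argument.
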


\begin{proof}
{It is plain that%
\[
F_{f,\sharp}(t)\leq f^{\sharp\ast}(t),\;\;0<t\leq1.
\]
Consequently, by equivalence \eqref{extrabmo1} (the implied constants depend
only on the dimension), we have%
\[
tF_{f,\sharp}(t)\preceq K(t,f;L^{1},BMO),\;\;0<t\leq1.
\]
}

Thus, the desired result will follow if we show that
\begin{equation}
K(t,f;L^{1},BMO)\preceq tF_{f,\sharp}(t),0<t\leq1 \label{equ4e}%
\end{equation}
with some absolute constant.

Given $t\in(0,1],$ we consider the set
\[
\Omega(t):=\{x\in Q_{0}:\,f^{\sharp}(x)>f^{\sharp\ast}(t)\}.
\]
It follows that for each $x\in\Omega(t)$ there exists a cube $Q_{x}$ such that
$Q_{x}\subset Q_{0}$, $x\in Q_{x}$, and
\begin{equation}
\frac{1}{|Q_{x}|}\int_{Q_{x}}|f-f_{Q_{x}}|>f^{\sharp\ast}(t). \label{equ2}%
\end{equation}
Note that, by the definition of the set $\Omega(t)$, we have $Q_{x}%
\subset\Omega(t)$ for every $x\in\Omega(t)$. Consider the family of cubes
$\{Q_{x}\}_{x\in\Omega(t)}.$ Using a Vitaly type covering lemma (cf.
\cite[p.~9]{Stein}), we can select a subfamily of pairwise disjoint cubes
$\{Q_{k}\}$ (which may contain a finite number of elements) such that
\begin{equation}
|\Omega(t)|=\Big|\bigcup_{x\in\Omega(t)}Q_{x}\Big|\leq5^{n}\sum_{k}|Q_{k}|.
\label{equ2extra}%
\end{equation}

Clearly $\pi=\{Q_{k}\}\in P$ and, moreover,{ by \eqref{equ2},}
\[
S_{\pi,\sharp}(f)(x)>f^{\sharp\ast}(t)\;\;\mbox{for all}\;\;x\in%
{\displaystyle\bigcup\limits_{k}}
Q_{k}.
\]
Therefore, combining \eqref{equ2extra} and the fact that $|\Omega(t)|\geq t$,
we obtain
\[
|\{x\in Q_{0}:\,S_{\pi,\sharp}(f)(x)>f^{\sharp\ast}(t)\}|\geq5^{-n}%
|\Omega(t)|\geq5^{-n}t.
\]
Thus, by the definition of the decreasing rearrangement of a measurable
function, it follows that,
\[
F_{f,\sharp}(5^{-n}t)\geq S_{\pi,\sharp}(f)^{\ast}(5^{-n}t)\geq f^{\sharp\ast
}(t),\;\;0<t\leq1.
\]
Equivalently,
\[
f^{\sharp\ast}(5^{n}t)\leq F_{f,\sharp}(t),\;\;0<t\leq5^{-n}.
\]
From the latter inequality, \eqref{extrabmo1} and the fact that
$K(t):=K(t,f;L^{1},BMO)$ is an increasing function, we have
\[
K(t)\leq K(5^{n}t)\simeq5^{n}tf^{\sharp\ast}(5^{n}t)\leq5^{n}tF_{f,\sharp
}(t),\;\;0<t\leq5^{-n}.
\]
Suppose now that $5^{-n}<t\leq1$. Let us first remark that $K(1)\leq\Vert
f\Vert_{L^{1}}.$ Indeed, we may assume that $\int_{Q_{0}}f(x)\,dx=0$ (see
Remark \ref{rem: mean zero}) and therefore to compute $K(1)$ we can use the
decomposition $f=f+0,$ and the assertion follows since%
\[
\Vert f\Vert_{L^{1}}=\frac{1}{\left\vert Q_{0}\right\vert }\int_{Q_{0}%
}\left\vert f-f_{Q_{0}}\right\vert dx\leq\Vert f\Vert_{BMO}.
\]
Let us also note that, since $\pi=\{Q_{0}\}\in P,$ we have $F_{f,\sharp
}(1)\geq\Vert f\Vert_{L^{1}}$. Consequently, using successively that $K(t)$ is
increasing, $F_{f,\sharp}(t)$ is decreasing, and $5^{n}t>1,$ we get
\[
K(t)\leq K(1)\leq\Vert f\Vert_{L^{1}}\leq F_{f,\sharp}(1)\leq5^{n}%
tF_{f,\sharp}(t).
\]
Thus, inequality \eqref{equ4e} holds for all $0<t\leq1$ with {constant
$c=5^{n}$.}
\end{proof}

\begin{remark}
Let $p\in(0,1)$. For any family of cubes $\pi=\{Q_{i}\}\in P(Q_{0})$ we let
\[
S_{\pi,\sharp}^{p}(f)(x):=\sum_{i}\left(  \frac{1}{|Q_{i}|}\int_{Q_{i}%
}|f-f_{Q_{i}}|^{p}\right)  ^{1/p}\chi_{Q_{i}}(x),
\]%
\[
F_{f,\sharp}^{p}(t):=sup_{\pi\in P}(S_{\pi,\sharp}^{p}(f))^{\ast}(t).
\]
Then, by a slight modification of the proof of Theorem \ref{theokfunct} we see
that the following equivalence holds
\[
K(t,f;L_{p},BMO)\simeq tF_{f,\sharp}^{p}(t),\;\;0<t\le1
\]
(cf. \cite[Remark~6.3]{BSh-79}).
\end{remark}

\section{Extensions of the Garsia-Rodemich construction\label{sec:Sob}}

We very briefly illustrate some of the results discussed in this paper showing
how adding a parameter to the Garsia-Rodemich construction leads to a
connection with the theory of Campanato spaces and the Morrey-Sobolev theorem.
We refer to \cite{adams} for more information and background.

\begin{definition}
Let $\lambda\in(-n,0],1<p\leq\infty.$ We shall say that $f\in L^{1}$ belongs
to $GaRo_{p,\lambda}$ if there exists a constant $C>0$ such that for all
$\{Q_{i}\}\in P,$
\end{definition}

\begin{equation}%
{\displaystyle\sum\limits_{i}}
\frac{1}{\left\vert Q_{i}\right\vert }\int_{Q_{i}}\int_{Q_{i}}\left\vert
f(x)-f(y)\right\vert dxdy\leq C\left(
{\displaystyle\sum\limits_{i}}
\left\vert Q_{i}\right\vert ^{1+\frac{\lambda}{n}}\right)  ^{1/p^{\prime}%
},\text{ where }1/p^{\prime}=1-1/p. \label{camp1}%
\end{equation}
and let%
\[
\left\Vert f\right\Vert _{GaRo_{p,\lambda}}:=\inf\{C:\text{(\ref{camp1})
holds}\}.
\]

Recall the definition of the homogeneous Campanato space $\mathcal{\dot{L}%
}^{1,\lambda}$ (cf. \cite[Section 2.2, pag 8]{adams})$:$

\begin{definition}
$\mathcal{\dot{L}}^{1,\lambda}=\{f:\left\Vert f\right\Vert _{\mathcal{\dot{L}%
}^{1,\lambda}}:=\sup_{Q\subset Q_{0}}\left\vert Q\right\vert ^{-\frac{\lambda
}{n}}(\frac{1}{\left\vert Q\right\vert }\int_{Q}\left\vert f-f_{Q}\right\vert
)<\infty\}.$
\end{definition}

\begin{theorem}
\label{campanato}$GaRo_{\infty,\lambda}=\left\{
\begin{array}
[c]{c}%
=\mathcal{\dot{L}}^{1,\lambda}\text{ },\text{ if }\lambda\in(-n,0)\\
= BMO,\text{ if }\lambda=0
\end{array}
\right.  .$
\end{theorem}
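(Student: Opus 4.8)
The plan is to exploit the fact that the end-point exponent $p=\infty$ forces $1/p'=1$, which makes the right-hand side of \eqref{camp1} \emph{additive} over the packing, exactly as the left-hand side already is. First I would record that for $p=\infty$ the defining inequality \eqref{camp1} reads
\[
\sum_i \frac{1}{|Q_i|}\int_{Q_i}\int_{Q_i}|f(x)-f(y)|\,dx\,dy \le C\sum_i |Q_i|^{1+\lambda/n},
\]
so that both sides are sums of per-cube contributions. Because of this additivity, the condition ``holds for every packing $\{Q_i\}\in P$'' is equivalent to the per-cube condition ``holds for the trivial packing $\{Q\}$, for every subcube $Q\subset Q_0$''. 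Concretely, I would show that
\[
\|f\|_{GaRo_{\infty,\lambda}} = \sup_{Q\subset Q_0}\frac{1}{|Q|^{1+\lambda/n}}\cdot\frac{1}{|Q|}\int_Q\int_Q |f(x)-f(y)|\,dx\,dy .
\]
The ``$\le$'' here is immediate by summing the per-cube bound over an arbitrary packing; the reverse ``$\ge$'' follows by testing the packing condition on a single cube $Q$, which is an admissible member of $P$.

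Next I would invoke the elementary comparison \eqref{extradulfa0}, which shows that $\frac{1}{|Q|}\int_Q\int_Q|f(x)-f(y)|\,dx\,dy$ is equivalent, within a factor of $2$, to $\int_Q|f-f_Q|\,dx$. Substituting this into the supremum above and writing $|Q|^{-1-\lambda/n}\int_Q|f-f_Q|\,dx = |Q|^{-\lambda/n}\cdot\frac{1}{|Q|}\int_Q|f-f_Q|\,dx$, the right-hand side becomes precisely the Campanato quantity defining $\|f\|_{\mathcal{\dot{L}}^{1,\lambda}}$. Dividing the chain \eqref{extradulfa0} by $|Q|^{1+\lambda/n}$ and taking the supremum over $Q$ yields the two-sided estimate
\[
\|f\|_{\mathcal{\dot{L}}^{1,\lambda}} \le \|f\|_{GaRo_{\infty,\lambda}} \le 2\|f\|_{\mathcal{\dot{L}}^{1,\lambda}},
\]
which establishes $GaRo_{\infty,\lambda}=\mathcal{\dot{L}}^{1,\lambda}$ with equivalent norms for $\lambda\in(-n,0)$. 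For $\lambda=0$ the identical chain applies, and since then $|Q|^{-\lambda/n}=1$ the supremum is exactly $\sup_{Q}\frac{1}{|Q|}\int_Q|f-f_Q|\,dx=\|f\|_{BMO}$, giving $GaRo_{\infty,0}=BMO$.

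I do not anticipate a genuine obstacle here, and this is worth emphasizing: unlike the cases $1<p<\infty$, where the concave exponent $1/p'$ genuinely couples the cubes of a packing and forces the heavy interpolation machinery of the earlier sections, the end-point $p=\infty$ decouples the packing and collapses the whole condition to a single-cube (pointwise) test. The only mild care needed is the bookkeeping of the equivalence constant from \eqref{extradulfa0} and the observation that both the trivial packing $\{Q\}$ and arbitrary countable packings are admissible elements of $P$, so that the reduction of the packing supremum to a supremum over single cubes is fully legitimate.
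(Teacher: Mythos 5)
Your proposal is correct and follows essentially the same route as the paper's own proof: one direction by testing the packing condition on a single cube $\{Q\}\in P$, the other by summing the per-cube Campanato bound over an arbitrary packing, with \eqref{extradulfa0} mediating between $\frac{1}{|Q|}\int_Q\int_Q|f(x)-f(y)|\,dx\,dy$ and $\int_Q|f-f_Q|\,dx$. Your explicit remark that $1/p'=1$ makes the right-hand side additive and hence collapses the packing condition to a single-cube test is just a cleaner articulation of what the paper's two displayed estimates do implicitly.
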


\begin{proof}
{Clearly, it is sufficiently to consider the case when $\lambda\in(-n,0)$.}

We will use repeatedly the fact that (see \eqref{extradulfa0})%
\[
\frac{1}{\left\vert Q\right\vert }\int_{Q}\int_{Q}\left\vert
f(x)-f(y)\right\vert dxdy\simeq\int_{Q}\left\vert f(x)-f_{Q}\right\vert \,dx.
\]
Consequently, we can write,%
\[
\left\Vert f\right\Vert _{\mathcal{\dot{L}}^{1,\lambda}}\simeq\sup_{Q\subset
Q_{0}}\left\vert Q\right\vert ^{-\frac{\lambda}{n}-1}\frac{1}{\left\vert
Q\right\vert }\int_{Q}\int_{Q}\left\vert f(x)-f(y)\right\vert dxdy.
\]
Suppose that $f\in GaRo_{\infty,\lambda}.$ Then, since for each $Q\subset
Q_{0}$ we have $\{Q\}\in$ $P,$ we see that%
\[
\frac{1}{\left\vert Q\right\vert }\int_{Q}\int_{Q}\left\vert
f(x)-f(y)\right\vert dxdy\leq\left\vert Q\right\vert ^{\frac{\lambda}{n}%
+1}\left\Vert f\right\Vert _{GaRo_{\infty,\lambda}}.
\]
Hence,%
\[
\left\Vert f\right\Vert _{\mathcal{\dot{L}}^{1,\lambda}}\preceq\left\Vert
f\right\Vert _{GaRo_{\infty,\lambda}}.
\]

Conversely, suppose that $f\in\mathcal{\dot{L}}^{1,\lambda}$ and let
$\{Q_{i}\}$ be an arbitrary element of $P.$ We compute,%
\begin{align*}%
{\displaystyle\sum\limits_{i}}
\frac{1}{\left\vert Q_{i}\right\vert }\int_{Q_{i}}\int_{Q_{i}}\left\vert
f(x)-f(y)\right\vert dxdy  &  =%
{\displaystyle\sum\limits_{i}}
\left\vert Q_{i}\right\vert ^{\frac{\lambda}{n}+1}\left\vert Q_{i}\right\vert
^{-\frac{\lambda}{n}-1}\frac{1}{\left\vert Q_{i}\right\vert }\int_{Q_{i}}%
\int_{Q_{i}}\left\vert f(x)-f(y)\right\vert dxdy\\
&  \preceq\left\Vert f\right\Vert _{\mathcal{\dot{L}}^{1,\lambda}}%
{\displaystyle\sum\limits_{i}}
\left\vert Q_{i}\right\vert ^{\frac{\lambda}{n}+1}.
\end{align*}
Consequently,%
\[
\left\Vert f\right\Vert _{GaRo_{\infty,\lambda}}\preceq\left\Vert f\right\Vert
_{\mathcal{\dot{L}}^{1,\lambda}}.
\]

\end{proof}

The import of the Campanato spaces stems from a well known result by Campanato
and Meyers (cf. \cite[(2.3), pag. 9]{adams}) showing that for $\lambda
\in(-1,0)$%
\begin{equation}
\mathcal{\dot{L}}^{1,\lambda}(Q_{0})=Lip(-\lambda)(Q_{0}). \label{meyer}%
\end{equation}

Let $\alpha\in(0,1),$ $p\geq1.$ Define,
\[
W^{\alpha,p}:=W^{\alpha,p}(Q_{0})=\{f:\left\Vert f\right\Vert _{W^{\alpha,p}%
}=\left\{  \int_{Q_{0}}\int_{Q_{0}}\frac{\left\vert f(x)-f(y)\right\vert ^{p}%
}{\left\vert x-y\right\vert ^{n+\alpha p}}dxdy\right\}  ^{1/p}<\infty\}.
\]
Then, we have the classical

\begin{theorem}
Let $p>\frac{n}{\alpha}$. Then
\[
W^{\alpha,p}\subset GaRo_{\infty,\frac{n}{p}-\alpha}=\mathcal{\dot{L}%
}^{1,\frac{n}{p}-\alpha}=Lip(\alpha-\frac{n}{p}).
\]

\end{theorem}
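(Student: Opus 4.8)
The plan is to prove directly the only genuinely new inclusion, namely $W^{\alpha,p}\subset GaRo_{\infty,\frac{n}{p}-\alpha}$; the two equalities in the statement are already supplied by Theorem \ref{campanato} (applied with $\lambda=\frac{n}{p}-\alpha$) and by the Campanato--Meyers identification \eqref{meyer}. Before anything else I would check that $\lambda:=\frac{n}{p}-\alpha$ lies in the admissible range of those two results: since $p>\frac{n}{\alpha}$ we have $\lambda<0$, and since $\alpha<1$ we have $\lambda>\frac{n}{p}-1>-1$, so $\lambda\in(-1,0)$ and both cited identifications apply.

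Writing $\ell=|Q|^{1/n}$ for the side length of a cube $Q\subset Q_{0}$, I would first estimate the Garsia--Rodemich double integral on a single cube. Splitting
\[
|f(x)-f(y)|=\frac{|f(x)-f(y)|}{|x-y|^{(n+\alpha p)/p}}\cdot|x-y|^{(n+\alpha p)/p}
\]
and applying H\"older's inequality with exponents $p$ and $p'$, the first factor reproduces exactly the Gagliardo integrand of $\Vert\cdot\Vert_{W^{\alpha,p}}$, while the second factor reduces to the elementary integral $\int_{Q}\int_{Q}|x-y|^{(n+\alpha p)/(p-1)}\,dx\,dy$. Since $\operatorname{diam}Q=\sqrt{n}\,\ell$, this integral is bounded by $C_{n}\ell^{\,2n+(n+\alpha p)/(p-1)}$, and raising to the power $1/p'=(p-1)/p$ a short power count produces the exponent $2n-\frac{n}{p}+\alpha$. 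Dividing by $|Q|=\ell^{n}$ I arrive at the key per-cube estimate
\[
\frac{1}{|Q|}\int_{Q}\int_{Q}|f(x)-f(y)|\,dx\,dy\leq C_{n}\Big(\int_{Q}\int_{Q}\frac{|f(x)-f(y)|^{p}}{|x-y|^{n+\alpha p}}\,dx\,dy\Big)^{1/p}\,|Q|^{\,1-\lambda/n},
\]
which is precisely the Morrey scaling $\ell^{\,\alpha-n/p}$ for the oscillation.

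Next I would sum this estimate over an arbitrary packing $\{Q_{i}\}\in P$. The point here is that no sharp superadditivity of the localized seminorms is required: bounding each localized factor crudely by the global seminorm $\Vert f\Vert_{W^{\alpha,p}}$ (legitimate because $Q_{i}\subset Q_{0}$) already suffices, since the right-hand side of the defining inequality \eqref{camp1} for $GaRo_{\infty,\lambda}$ (where the exponent $1/p'$ equals $1$) already carries the favorable sum $\sum_{i}|Q_{i}|^{1+\lambda/n}$. The remaining arithmetic obstacle is to reconcile the exponent $1-\lambda/n$ produced above with the exponent $1+\lambda/n$ demanded by \eqref{camp1}; this is resolved by the observation that $\lambda<0$ together with $|Q_{i}|\leq|Q_{0}|=1$ forces $|Q_{i}|^{\,1-\lambda/n}\leq|Q_{i}|^{\,1+\lambda/n}$. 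Combining the per-cube estimate with these two remarks yields
\[
\sum_{i}\frac{1}{|Q_{i}|}\int_{Q_{i}}\int_{Q_{i}}|f(x)-f(y)|\,dx\,dy\leq C_{n}\Vert f\Vert_{W^{\alpha,p}}\sum_{i}|Q_{i}|^{\,1+\lambda/n},
\]
which is exactly the $GaRo_{\infty,\lambda}$ condition and gives $\Vert f\Vert_{GaRo_{\infty,\lambda}}\leq C_{n}\Vert f\Vert_{W^{\alpha,p}}$.

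I expect the only delicate point to be the power bookkeeping in the H\"older step---keeping track of $n+\alpha p$, of $p'=p/(p-1)$, and of the diameter factor, so that the final exponent comes out as the Morrey exponent $\alpha-\frac{n}{p}$; everything else (the replacement of $|Q_{i}|^{\,1-\lambda/n}$ by $|Q_{i}|^{\,1+\lambda/n}$ using $|Q_{i}|\leq1$, and the final passage through Theorem \ref{campanato} and \eqref{meyer}) is routine. It is worth noting that the hypothesis $p>\frac{n}{\alpha}$ is used twice: to make $\alpha-\frac{n}{p}$ positive, so that the scaling is that of a genuine H\"older space, and to place $\lambda$ in $(-1,0)$, so that the two cited equalities are available.
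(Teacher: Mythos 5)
Your proposal is correct and is essentially the paper's own argument: the identical H\"older split of $|f(x)-f(y)|$ against the Gagliardo kernel $|x-y|^{(n+\alpha p)/p}$ over $Q\times Q$, with the resulting favorable power of $|Q|$ (your $|Q|^{1-\lambda/n}$ versus the required $|Q|^{1+\lambda/n}$, reconciled via $|Q|\leq 1$ and $\lambda<0$; the paper's version of the same slack is its factor $|Q|^{2(\frac{\alpha}{n}-\frac{1}{p})}\leq |Q_{0}|^{2(\frac{\alpha}{n}-\frac{1}{p})}$), and with Theorem \ref{campanato} and \eqref{meyer} supplying the two identifications. The only cosmetic difference is routing: the paper bounds the Campanato quantity $I=|Q|^{\frac{\alpha}{n}-\frac{1}{p}-1}\frac{1}{|Q|}\int_{Q}\int_{Q}|f(x)-f(y)|\,dx\,dy$ uniformly per cube and then uses the equality $GaRo_{\infty,\lambda}=\mathcal{\dot{L}}^{1,\lambda}$, whereas you verify the packing condition \eqref{camp1} directly by summing the per-cube estimate, which simply inlines the inclusion half of Theorem \ref{campanato}'s proof.
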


\begin{proof}
Note that $-1<\frac{n}{p}-\alpha<0$. In view of \ Theorem \ref{campanato},
\eqref{camp1} and (\ref{meyer}) for any cube $Q\subset Q_{0}$ we need to
estimate from above the quantity
\[
I:=\left\vert Q\right\vert ^{\frac{\alpha}{n}-\frac{1}{p}-1}\frac
{1}{\left\vert Q\right\vert }\int_{Q}\int_{Q}\left\vert f(x)-f(y)\right\vert
dxdy.
\]
We proceed as follows,%
\begin{align*}
I  &  \preceq\left\vert Q\right\vert ^{\frac{\alpha}{n}-\frac{1}{p}%
-2}\left\vert Q\right\vert ^{\frac{n+\alpha p}{np}}\int_{Q}\int_{Q}%
\frac{\left\vert f(x)-f(y)\right\vert }{\left\vert x-y\right\vert
^{\frac{n+\alpha p}{p}}}dxdy\\
&  \leq\left\vert Q\right\vert ^{\frac{\alpha}{n}-\frac{1}{p}-2+\frac{1}%
{p}+\frac{\alpha}{n}}\left\vert Q\right\vert ^{2(1-\frac{1}{p})}\left\{
\int_{Q}\int_{Q}\frac{\left\vert f(x)-f(y)\right\vert ^{p}}{\left\vert
x-y\right\vert ^{n+\alpha p}}dxdy\right\}  ^{1/p}\text{ (by H\"{o}lder's
inequality)}\\
&  \leq\left\vert Q\right\vert ^{2(\frac{\alpha}{n}-\frac{1}{p})}\left\Vert
f\right\Vert _{W^{\alpha,p}}\\
&  \leq\left\vert Q_{0}\right\vert ^{2(\frac{\alpha}{n}-\frac{1}{p}%
)}\left\Vert f\right\Vert _{W^{\alpha,p}},\\
&  \text{as we wished to prove.}%
\end{align*}

\end{proof}

\end{document}